\title{\textbf{Discrete uncertainty principles and Virial identities}}
\author{Aingeru Fern\'andez-Bertolin}
\subjclass[2010]{ Primary: 35Q41, Secondary: 26D15, 39A70, 42B05}
\keywords{Discrete uncertainty principles, discrete Schr\"{o}dinger equation, Virial identity, modified Bessel functions.}
\date{\today}
\address{A. Fern\'andez-Bertolin:Departamento de Matem\'aticas,  Universidad del Pa\'is Vasco UPV/EHU, apartado 644, 48080, Bilbao, Spain}
\email{aingeru.fernandez@ehu.es}
\newtheorem{teo}{Theorem}[section]
\newtheorem{prop}{Proposition}[section]
\newtheorem{lem}{Lemma}[section]
\newtheorem{cor}{Corollary}[section]
\theoremstyle{remark}
\newtheorem{rem}{Remark}[section]
\newcommand{\sign}{\operatorname{sign}}
\newcommand{\sumhz}{h\sum_{k=-\infty}^\infty}
\newcommand{\sumhzd}{h^d\sum_{k\in\mathbb{Z}^d}}
\newcommand{\sumhzrec}{h^2\Re\sum_{k=-\infty}^\infty}
\newcommand{\sumhzimc}{h^2\Im\sum_{k=-\infty}^\infty}
\newcommand{\sumzd}{\sum_{k\in\mathbb{Z}^d}}
\newcommand{\Ah}{\mathcal{A}_h}
\newcommand{\Sh}{\mathcal{S}_h}
\begin{document}

\begin{abstract}
In this paper we review the Heisenberg uncertainty principle in a discrete setting and, as in the classical uncertainty principle, we give it a dynamical sense related to the discrete Schr\"{o}dinger equation. We study the convergence of the relation to the classical uncertainty principle, and, as a counterpart, we also obtain another discrete uncertainty relation that does not have an analogous form in the continuous case. Moreover, in the case of the Discrete Fourier Transform, we give a inequality that allows us to relate the minimizer to the Gaussian.
\end{abstract}
\maketitle

\section{Introduction}

The well-known Heisenberg uncertainty principle \cite{c} states that
\begin{equation}\label{eq001}\frac{2}{d}\left(\int_{\mathbb{R}^d}|xf(x)|^2\,dx\right)^{1/2}\left(\int_{\mathbb{R}^d}|\nabla f(x)|\,dx\right)^{1/2}\ge \int_{\mathbb{R}^d}|f(x)|^2\,dx.\end{equation}

Moreover, the minimizing function (that for which \eqref{eq001} is an equality) satisfies, for $\alpha> 0$, $\nabla f(x)+\alpha x f(x)=0\Longrightarrow f(x)=Ce^{-\alpha |x|^2/2}$ (Gaussian). 

Now, if we consider $u(x,t)$ a solution to the Schr\"{o}dinger free equation, there is a dynamic interpretation of the uncertainty principle, which was exploited in \cite{d,e}.

\begin{teo}[Dynamic uncertainty principle]\label{teo11}
Assume $u(x,t)$ is a solution to
\[\left\{\begin{array}{ll}\partial_tu(x,t)=i\Delta u(x,t),& x\in\mathbb{R}^d,t\in\mathbb{R},\\ u(x,0)=u_0(x),&\end{array}\right.\]
where $u_0\in\dot H^1(\mathbb{R}^d)\cap L^2(\mathbb{R}^d,|x|^2\,dx),\ \ \|u_0\|_2^2=2/d.$ For a real function $\phi(x)$ we define
\[
h(t)=\int_{\mathbb{R}^d}\phi(x)|u(x,t)|^2\,dx,\ \ \ a=\int_{\mathbb{R}^d}|x|^2|u_0(x)|^2\,dx<+\infty,\ \ \ b=\int_{\mathbb{R}^d}|\nabla u_0(x)|^2\,dx<+\infty.
\]

Then,
\begin{equation}\label{eq002}
\ddot h(t)=4\int_{\mathbb{R}^d}\nabla u D^2\phi \overline{\nabla u}-\int_{\mathbb{R}^d}\Delta^2\phi|u|^2.\ \ \ \text{(Virial identity)}
\end{equation}

Moreover, if $\phi(x)=|x|^2$,
\begin{equation}\label{eq003}
h(t)=a+4bt^2\ge a+\frac{4t^2}{a},
\end{equation}
and, if these two convex parabolas intersect each other, they are the same parabola and the initial datum is $u_0(x)=Ce^{-\alpha|x|^2/2}$, being then
\[
u(x,t)=\left(\frac{1}{2\alpha it+1}\right)^{d/2}\exp\left(\frac{i\alpha^2t|x|^2}{4\alpha^2t^2+1}-\frac{\alpha|x|^2}{8\alpha^2t^2+2}\right).
\]
\end{teo}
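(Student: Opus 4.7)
The plan is to treat the three assertions in turn.

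\textbf{Virial identity.} I would differentiate $h$ twice in $t$, using $\partial_t u=i\Delta u$ to trade time derivatives for Laplacians, and then integrate by parts until all derivatives fall on $\phi$. From $\partial_t|u|^2=-2\,\mathrm{Im}(\bar u\,\Delta u)$ one obtains $\dot h=-2\,\mathrm{Im}\!\int\phi\,\bar u\,\Delta u\,dx$, and an integration by parts (with $\int\phi|\nabla u|^2$ dropping out as a real quantity) yields $\dot h=2\,\mathrm{Im}\!\int\bar u\,\nabla\phi\cdot\nabla u\,dx$. A second time-differentiation followed by an integration by parts on the $\nabla\Delta u$ factor gives $\ddot h=-4\,\mathrm{Re}\!\int\Delta u\,\nabla\phi\cdot\nabla\bar u\,dx-2\,\mathrm{Re}\!\int\bar u\,\Delta u\,\Delta\phi\,dx$. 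I would then apply $\mathrm{Re}(\bar u\,\Delta u)=\tfrac12\Delta|u|^2-|\nabla u|^2$ on the second piece (with two further integrations by parts that move both Laplacians onto $\phi$, producing $\Delta^2\phi\,|u|^2$), and one more integration by parts on the first piece to generate the $\int\nabla u\,D^2\phi\,\overline{\nabla u}$ term; the two $\Delta\phi\,|\nabla u|^2$ contributions cancel, leaving exactly \eqref{eq002}.

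\textbf{Specialization to $\phi=|x|^2$.} Here $D^2\phi=2I$ and $\Delta^2\phi=0$, so \eqref{eq002} collapses to $\ddot h(t)=8\!\int|\nabla u(x,t)|^2\,dx$. The quantity $\|\nabla u(\cdot,t)\|_2$ is conserved along the free flow (immediate by Plancherel from $\widehat u(\xi,t)=e^{-it|\xi|^2}\widehat{u_0}(\xi)$), so $\ddot h\equiv 8b$. Combined with $h(0)=a$ and $\dot h(0)=0$ (which holds e.g.\ for real-valued $u_0$, as for the extremizers below), integration gives $h(t)=a+4bt^2$; the lower bound $a+4t^2/a$ then follows from \eqref{eq001} applied to $u_0$, which together with $\|u_0\|_2^2=2/d$ forces $\sqrt{ab}\ge 1$.

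\textbf{Rigidity and explicit solution.} Both parabolas pass through $(0,a)$ and one dominates the other everywhere, so they can meet again only when they coincide, i.e.\ $b=1/a$---the equality case of \eqref{eq001}. The Euler--Lagrange condition recalled immediately below \eqref{eq001} then forces $u_0(x)=Ce^{-\alpha|x|^2/2}$. To obtain the explicit $u(x,t)$, I would evolve this Gaussian with the free Schr\"odinger propagator on the Fourier side: the transform of a Gaussian is again a Gaussian, multiplication by $e^{-it|\xi|^2}$ folds into the quadratic exponent, and inversion via a standard complex-Gaussian integral produces the stated formula after rationalizing the exponent to separate phase and amplitude.

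The main obstacle is the bookkeeping in step 1, where the real/imaginary part accounting through repeated integrations by parts must be tracked carefully to produce the clean form \eqref{eq002}. The remaining parts are essentially direct consequences: algebraic manipulation using conservation of $\|\nabla u\|_2^2$ and the scalar Heisenberg inequality, together with an explicit Gaussian Fourier computation for the extremal profile.
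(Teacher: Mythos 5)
Your proposal is correct and follows the standard argument; note that the paper never actually proves Theorem \ref{teo11} (it is quoted from the references \cite{d,e}), and its proof of the discrete analogue (Theorem \ref{teo31}) proceeds by exactly the scheme you describe: two time-differentiations with integration by parts (there, discrete Leibniz rules and a symmetrization in the forward/backward differences), conservation of the gradient norm checked on the Fourier side, the Heisenberg inequality giving $\sqrt{ab}\ge 1$, and the equality case identifying the minimizer. Your explicit flag that $\dot h(0)=0$ must be arranged is the one point where the stated formula $h(t)=a+4bt^2$ needs care, and the paper handles it the same way in the discrete setting, by a translation in time.
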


Observe that the normalization condition in the initial datum gives, thanks to the uncertainty principle \eqref{eq001} that $ab\ge 1$. 

In this paper we want to develop this theory in a discrete setting discretizing the momentum and position operators. Since we can relate a sequence to a periodic function via Fourier series, there is a duality between discrete uncertainty principles and periodic uncertainty principles. The relation we study here appears in the literature (see \cite{a,g,b}) in this periodic form. Moreover, in \cite{b} the authors suggested another uncertainty relation. Their aim was to study the angular momentum - angle variables on the sphere, so they related the orbital angular momentum to the azimuthal angle about the $z$ axis. Then, the orbital momentum is written as a differential operator and, for a meaningful uncertainty principle, periodicity is required for the position operator. Hence, the authors suggested the operators $\cos(x)$ and $\sin(x)$ to represent position. Considering this duality via Fourier series, the second case is connected with the discrete version of Heisenberg uncertainty principle that we will study here. In the first case, we will get another relation that does not have a continuous version.

Another version of the Heisenberg uncertainty principle appears in \cite{j,k,g}, but in this case the equality is not attained. However, it is possible to construct a sequence of polynomials $p_k$ of degree $k$ such that the inequality approaches the equality as $k$ tends to infinity. Nevertheless, we will not study this relation here.

As it happens for the Heisenberg uncertainty principle in the continuous case, we will derive Virial identities equivalent to \eqref{eq002} for both relations. Thus we will give them a dynamical interpretation (equivalent to \eqref{eq003}). On the one hand, the dynamics will be given by the discrete Schr\"odinger equation, as it is expected. On the other hand, it will appear an equation that turns out to be an $L^2$-invariant factorization of the one dimensional wave equation.

Since we see an analogy between the continuous and discrete dynamic uncertainty principles, it seems reasonable to have similarities between the solution to the continuous Schr\"odinger equation with initial datum the Gaussian and the solution to the discrete equation, now with initial datum the minimizer of the discrete relation. In the continuous case, it is known that this solution satisfies another equation in the form $(\mathcal{S}+\mathcal{A})\omega=0$, where $\mathcal{S}$ is a symmetric operator and $\mathcal{A}$ is a skew-symmetric operator, so we prove here that in the discrete case the same statement holds.

Apart from this, we consider another discrete setting, the case of finite sequences. The motivation here comes from \cite{h}, where the author gives a relation for the Discrete Fourier Transform, but he suggested that the minimizing sequence of his inequality is not similar to the Gaussian. Here, we will slightly modify this relation in order to see that the minimizer approaches the minimizer of the periodic uncertainty we have mentioned above. Besides, we give two uncertainty principles truncating the operators we will study in Section 2 and imposing periodic or Dirichlet boundary conditions. In these two cases, when the number of nodes tends to infinity we recover the discrete uncertainty principle. However, we will see that we do not have a convex parabola with these versions of the position and momentum operators. This fact is consistent with the periodic Schr\"odinger equation, since there is no convex parabola equivalent to $h(t)$ in Theorem \ref{teo11} in this case.

This paper is organized as follows: In Section 2 we introduce the discrete uncertainty principle we want to study, seeing that the minimizer tends to the Gaussian in the continuous limit. We also discuss the other discrete uncertainty principle related to the $\cos(x)$ operator in the space of periodic functions. In Section 3 we give dynamical interpretations for the uncertainty principles discussed in Section 2. In Section 4 we observe that the continuous and discrete solutions to the Schr\"odinger equation with initial datum  the respective minimizer share some properties. In Section 5 first we give a slight modification of the uncertainty principle stated in \cite{h} that allows us to connect the minimizer to the minimizer of the periodic uncertainty principle of Section 2, and therefore, to the Gaussian. We also truncate the position and momentum operators in Section 2 to consider two cases, the periodic and the Dirichlet case, noticing that we can not repeat the theory we develop in Secion 3.

\section{Uncertainty principle in $H(\mathbb{Z}^d)$}

A useful tool to obtain uncertainty relations is the following (see \cite{f}): Let $\mathcal{S}$ a symmetric operator and $\mathcal{A}$ a skew-symmetric operator in a Hilbert space. Then
\begin{equation}\label{eq004}
|\langle -[\mathcal{S},\mathcal{A}]f,f\rangle| \le 2\|\mathcal{S}f\|\|\mathcal{A}f\|.
\end{equation}

Moreover, the equality is attained when $\alpha \mathcal{S}f+\mathcal{A}f=0$ for $0\ne \alpha\in\mathbb{R}$.

To prove Heisenberg uncertainty principle we set 
\[\mathcal{S}f=xf,\ \mathcal{A}f=\nabla f,\]
so we are going to discretize these operators $\mathcal{S}$ and $\mathcal{A}$. We discretize $\mathbb{R}^d$ with  the same step $h>0$ in all directions, that is, we consider the discretization nodes $x_k=kh$ for $k=(k_1,\dots,k_d)\in\mathbb{Z}^d$, and we are going to work in the space 
\[H(\mathbb{Z}^d)=\{(u_k)_{k\in\mathbb{Z}^d}:  \sumzd|u_k|^2+\sumzd|ku_k|^2<+\infty\}.\]

Now we define our versions of the position and momentum operators
\[
\mathcal{S}_hu_k=khu_k=(k_1h,\dots,k_dh)u_k,\ \ \ \ \ \mathcal{A}_hu_k=\left(\frac{u_{k+e_1}-u_{k-e_1}}{2h},\dots,\frac{u_{k+e_d}-u_{k-e_d}}{2h}\right),
\]
where $e_j=(0,\dots,0,\overbrace{1}^j,0,\dots,0)$, for $j=1,\dots,d$.

It is easy to check that the operators $\Sh$ and $\Ah$ are symmetric and skew-symmetric respectively (with respect to the inner product $\langle u,v\rangle=h^d\sum_{k\in\mathbb{Z}^d} u_k\overline{v_k}$, and we are going to consider this inner product when talking about $\ell^2$ norms). Using \eqref{eq004}, we have the following discrete version of the uncertainty principle: $\forall u\in H(\mathbb{Z}^d),$
\begin{equation}\label{eq005}\begin{aligned}
\left|\sumhzd \sum_{j=1}^d\frac{u_{k+e_j}+u_{k-e_j}}{2}\overline{u_k}\right|&\\\ \le2&\left(\sumhzd|khu_k|^2\right)^{1/2}\left(\sumhzd\sum_{j=1}^d \left|\frac{u_{k+e_j}-u_{k-e_j}}{2h}\right|^2\right)^{1/2}.
\end{aligned}\end{equation}

We can manipulate the left-hand side of \eqref{eq005} to obtain
\begin{equation}\label{eq006}\begin{aligned}
\left|d\sumhzd|u_k|^2-\frac{h^2}{2}\sumhzd\sum_{j=1}^d\left|\frac{u_{k+e_j}-u_{k}}{h}\right|^2\right|\\ \le2\left(\sumhzd|khu_k|^2\right)^{1/2}&\left(\sumhzd\sum_{j=1}^d\left|\frac{u_{k+e_j}-u_{k-e_j}}{2h}\right|^2\right)^{1/2}.
\end{aligned}\end{equation}

In order to take the continuous limit we consider that $u=(u_k)_{k\in\mathbb{Z}^d}$ is the discretization of a function $f(x)\in\mathcal{S}(\mathbb{R}^d)$ (in other words, $u_k=f(x_k)=f(kh)$ for some $f$) and we let $h$ tend to zero. We notice that the second sum in the left-hand side tends to zero when $h$ tends to zero. Indeed, without  the factor $h^2/2$ this sum would tend to $\int_{\mathbb{R}^d}|\nabla f(x)|^2\,dx$, since we have the forward finite difference operator of first order. Therefore, adding the factor $h^2/2$ makes the sum tend to zero. The other sums tend to their respective integrals in the classic Heisenberg uncertainty principle \eqref{eq001}.

Now we will rewrite this inequality in the Fourier space. If we look at $u_k$ as the Fourier coefficient of a $\frac{2\pi}{h}$-periodic function in each variable $f$, we have the following relations between $u$ and $f$,

\begin{equation}\label{eq207}\begin{aligned}
u_k=&\hat{f}(k)=\int_{[-\pi/h,\pi/h]^d}f(\xi)e^{-i\xi\cdot kh}\,d\xi,
\\&f(x)=\left(\frac{h}{2\pi}\right)^d\sum_{k\in\mathbb{Z}^d}u_ke^{ihk\cdot x}.
\end{aligned}\end{equation}  

Considering these relations, we can rewrite the inequality \eqref{eq006}  to have
\begin{equation}\label{eq007}\begin{aligned}
&\left|\int_{[-\pi/h,\pi/h]^d}\sum_{j=1}^d\cos(x_jh)|f(x)|^2\,dx\right|\\&\hspace{1.6 cm} \le2\left(\int_{[-\pi/h,\pi/h]^d}|\nabla f(x)|^2\,dx\right)^{1/2}\left(\int_{[-\pi/h.\pi/h]^d}\sum_{j=1}^d\left|\frac{\sin(x_jh)}{h}f(x)\right|^2\,dx\right)^{1/2}.
\end{aligned}\end{equation}

As it was pointed out in \cite{k}, we have to exclude some cases in \eqref{eq006}. If we want to give an inequality of the type $ab\ge 1$, we need to normalize one quantity that can be zero, so we assume that the function $f$ satisfies
\begin{equation}\label{eq008}
\sum_{j=1}^d\int_{[-\pi/h,\pi/h]^d}\cos(x_jh)|f|^2\,dx\ne 0,
\end{equation}
and, under this assumption we can normalize \eqref{eq007}.

In the sequence space, this condition means that we have to work with sequences such that
\[
\Re\sumzd\sum_{j=1}^d u_k\overline{u_{k+e_j}}\ne 0,
\]
but it is easy to see that the subspace of these sequences is dense in $\ell^2(\mathbb{Z}^d)$. If we are given an $\epsilon>0$ and $0\ne u\in\ell^2(\mathbb{Z}^d)$, then adding $c\epsilon$ in the appropriate coordinate gives us an $\omega$ such that $\|u-\omega\|_2^2=\sumhzd|u_k-\omega_k|^2\le \epsilon$ and $\Re\sumzd\omega_k\overline{\omega_{k+e_j}}\ne 0.$

Once we have this uncertainty relation, we are interested in knowing for which sequence the equality in \eqref{eq006} holds. This sequence which we denote by $\omega^h$ has to satisfy, for $0\ne\alpha\in\mathbb{R}$, the equation $\alpha\Sh\omega^h+\Ah\omega^h=\bf{0}$, where $\bf{0}$ is the sequence whose components are all zero. Then, we have the recurrence relation
\[
\alpha\Sh\omega_k^h+\Ah\omega_k^h=0,\ \ \forall k\in\mathbb{Z}^d\Longleftrightarrow \alpha k_jh\omega_k^h+\frac{\omega_{k+e_j}^h-\omega_{k-e_j}^h}{2h},\ \ \forall k\in\mathbb{Z}^d,\ j=1,\dots,d.
\]
This is the recurrence relation satisfied by a product of modified Bessel functions of the first and second kind. However, we will use the Fourier method to find the minimizing sequence, because the uncertainty principle in the Fourier space is also interesting. If we solve the recurrence looking at $\omega_k^h$ as the Fourier coefficient of a $2\pi/h$-periodic function in each variable $f(x)$, we have
\begin{equation}\label{eq109}
(\alpha\Sh+\Ah)\omega_k^h=0,\forall k\in\mathbb{Z}^d \Longleftrightarrow \alpha \partial_{x_j}f(x)+\frac{\sin(x_jh)}{h}f(x)=0,\ j=1,\dots,d.\end{equation}

Solving the equation, we get
\[
f(x)=C\exp\left(\sum_{j=1}^d\frac{\cos(x_jh)}{\alpha h^2}\right).
\]

We set the constant $C=C_{\alpha,h}$, for example, in order to make the norm in the $L^2[-\pi/h,\pi/h]^d$ space of $f$ equal to 1. We can also set the constant $C$ taking into account the normalization condition \eqref{eq008}, to make that quantity equal to 1.

\begin{rem} In this periodic case, this function is in the appropriate Hilbert space $\forall\alpha\ne 0$, while in the continuous case it is required the extra condition $\alpha>0$. To study convergence to the classic case we will assume that $\alpha>0$.\end{rem}

Once we know who $f(x)$ is, we compute the $k$-th Fourier coefficient of $f(x)$ to get $\omega_k^h$,
\[
\omega_k^h=\int_{[-\pi/h,\pi/h]^d}f(x)e^{-ix\cdot kh}\,dx.
\]

As we have said above, this coefficient is related to the modified Bessel function of the first kind, which has lots of representations, such as
\[
I_m(z)=\frac{1}{\pi}\int_0^\pi e^{z\cos\theta}\cos(m\theta)\,d\theta.
\]
Then, it is easy to check that, under the normalization condition $\omega_{\bf 0}^h=1$,
\[
\omega_{k}^h=\omega_{k_1,\dots,k_d}^h=\prod_{i=1}^d\frac{I_{k_i}\left(\frac{1}{\alpha h^2}\right)}{I_{0}\left(\frac{1}{\alpha h^2}\right)}.
\]

The modified Bessel function of the second kind $K_{k_j}(1/\alpha h^2)$ also satisfies this recurrence relation (if we multiply it by the factor $(-1)^{k_j}$), but this sequence is not in $\ell^2(\mathbb{Z}^d)$, so it makes no sense to consider this sequence, and this is the reason why we only get the modified Bessel function of the first kind using the Fourier method.

We are going to take $\alpha=1$ for simplicity, and we will see the convergence of the minimizer to $e^{-|x|^2/2}$. The same proof is valid for each value of $\alpha>0$. The way to approach the Gaussian is to take $h$ and $k=(k_1,\dots,k_d)$ in a proper way such that $kh$ approach to a given point $x=(x_1,\dots,x_d)\in\mathbb{R}^d$. We can do this defining, for $j\in\mathbb{N}$,
\[
h_j=\max_{i\in\{1,\dots,d\}}\{|x_i|\}/j,\ \ k_m^{(j)}=\left\{\begin{array}{ll}\lceil x_m/h_j\rceil,&\text{if }x_m\ge0,\\\lfloor x_m/h_j\rfloor,&\text{if }x_m\le0.\end{array}\right.
\]

Here $\lceil z\rceil$ and $\lfloor z\rfloor$ stand for the \textit{ceiling} and \textit{floor} functions. Notice that $\lfloor -z^2\rfloor =-\lceil z^2\rceil.$ Now we define the function
\[
f_j(x)=\left\{\begin{array}{cc}\displaystyle\prod_{m=1}^d\frac{I_{k_m^{(j)}}(1/h_j^2)}{I_{0}(1/h_j^2)}=\omega_{k_1^{(j)},\dots,k_d^{(j)}}^{h_j},& \text{if }x\ne{\bf0},\\ 1&\text{if }x={\bf0}.\end{array}\right.
\]
and this is the function that will give us the connection between the two minimizers, when $j$ goes to infinity. It is quiet easy to see that $f_j$ is an even function in each variable. We have the following result.
\begin{teo}
Given $\epsilon>0$, There is $j_0$ such that if $j\ge j_0$ then we have,
\[
\sup_{x\in\mathbb{R}^d}\left|f_j(x)-e^{-|x|^2/2}\right|<\epsilon.
\]
\end{teo}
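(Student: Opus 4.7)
The plan is to pass from the modified Bessel description $\omega_k^h=I_k(1/h^2)/I_0(1/h^2)$ to an integral formula amenable to Laplace's method. Using $I_k(z)=\frac{1}{\pi}\int_0^\pi e^{z\cos\theta}\cos(k\theta)\,d\theta$ and the change of variable $\theta=h s$, each factor of $f_j$ becomes a ratio of integrals over $[0,\pi/h_j]$ whose common integrand is $\rho_h(s):=e^{[\cos(hs)-1]/h^2}$. The elementary inequalities $2u^2/\pi^2\le 1-\cos u\le u^2/2$ for $|u|\le\pi$ provide the uniform Gaussian domination $\rho_h\le e^{-2s^2/\pi^2}$ together with the pointwise limit $\rho_h(s)\to\rho(s):=e^{-s^2/2}$, and dominated convergence gives $\|\rho_h-\rho\|_{L^1(\mathbb{R}_+)}\to 0$ as $h\to 0$. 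Since $|\cos(ys)|\le 1$ and $\int_0^\infty e^{-s^2/2}\cos(ys)\,ds=\sqrt{\pi/2}\,e^{-y^2/2}$, a short manipulation of the ratio yields
$$\bigl|\omega_k^h-e^{-(kh)^2/2}\bigr|\le C\,\|\rho_h-\rho\|_{L^1},$$
with $C$ independent of $k$ and $h$. This already suffices for pointwise convergence of $f_j(x)$ to $e^{-|x|^2/2}$.

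The subtle point in upgrading to uniform convergence on $\mathbb{R}^d$ is that $h_j=|x|_\infty/j$ depends on $x$, so I would fix some $\delta\in(0,1/2)$ and split $\mathbb{R}^d$ into the \emph{moderate} region $\{|x|_\infty\le j^{1-\delta}\}$ and its complement. On the moderate region $h_j\le j^{-\delta}\to 0$ uniformly, so the previous bound combined with the telescoping estimate $|\prod A_m-\prod B_m|\le\sum|A_m-B_m|$ (valid since all terms are bounded by $1$) shows that $f_j(x)$ is uniformly close to $\prod_m e^{-(k_m^{(j)}h_j)^2/2}$; and since $|k_m^{(j)}h_j-x_m|\le h_j$ and $x\mapsto e^{-|x|^2/2}$ is globally Lipschitz with constant $1/\sqrt{e}$, the latter product is in turn uniformly close to $e^{-|x|^2/2}$ on this region.

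On the far region $\{|x|_\infty>j^{1-\delta}\}$ the Gaussian $e^{-|x|^2/2}\le e^{-j^{2-2\delta}/2}$ is uniformly small; for the other term I would use the elementary non-asymptotic bound $I_k(z)/I_0(z)\le (z/2)^k/k!$, which follows from the series representations via $(n+k)!\ge k!\,n!$. Combined with $|k_{m^*}^{(j)}|=j$ (where $m^*$ achieves $|x|_\infty$) and Stirling's formula, this produces
$$|f_j(x)|\le\omega_j^{h_j}\le \frac{\bigl(e/(2j^{1-2\delta})\bigr)^j}{\sqrt{2\pi j}},$$
which decays faster than any polynomial in $1/j$ since $\delta<1/2$. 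The main obstacle throughout is precisely this far region, where $h_j$ is no longer small and Laplace's method breaks down; the elementary series bound on $I_k/I_0$ is what rescues the argument by supplying the required super-polynomial decay.
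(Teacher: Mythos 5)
Your argument is correct, but it takes a genuinely different route from the paper's. The paper works with asymptotic expansions: Olver's uniform expansion of $I_\nu(\nu z)$ together with the large-argument expansion of $I_0(t)$, applied after splitting $\mathbb{R}^d_+$ into the bulk region $a\le x_m\le x_1\le M$, the region where some coordinate drops below $a$, and the far region $x_1>M$, the last being handled through the monotonicity of $t\mapsto I_\nu(t)/I_0(t)$ (which rests on Amos's inequality for $I_\nu'/I_\nu$). You instead exploit the integral representation to write $I_k(1/h^2)/I_0(1/h^2)$ as a ratio of integrals of $\rho_h(s)=e^{(\cos(hs)-1)/h^2}$ against $\cos(khs)$ over $[0,\pi/h]$; the two-sided bound $2u^2/\pi^2\le 1-\cos u\le u^2/2$ on $|u|\le\pi$ supplies both the integrable dominating function $e^{-2s^2/\pi^2}$ and the denominator lower bound $\int_0^{\pi/h}\rho_h\ge\int_0^{\pi}e^{-s^2/2}\,ds>1$, so a single application of dominated convergence yields $\bigl|\omega_k^h-e^{-(kh)^2/2}\bigr|\le 2\|\rho_h-\rho\|_{L^1}$ \emph{uniformly in} $k$. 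This one estimate, combined with the telescoping inequality for products bounded by $1$ and the Lipschitz bound for the Gaussian, replaces the paper's entire bulk-region case analysis, while the elementary series bound $I_k(z)/I_0(z)\le(z/2)^k/k!$ (from $(n+k)!\ge n!\,k!$) replaces the monotonicity lemma in the far region $|x|_\infty>j^{1-\delta}$, where indeed $z=1/h_j^2\le j^{2\delta}$ and Stirling gives super-polynomial decay for $\delta<1/2$. What your route gives up is the explicit rate: the paper's computations produce an $O(1/j)$ error on compacta away from the axes, whereas dominated convergence as invoked gives no rate (though $\|\rho_h-\rho\|_{L^1}=O(h^2)$ can be extracted from $|1-\cos u-u^2/2|\le u^4/24$ if one wants it). What it gains is uniformity in $k$ essentially for free, no bookkeeping of constants depending on the cutoffs $a$ and $M$, and entirely elementary, self-contained inputs in place of the cited asymptotic expansions.
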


\textit{Proof.}  Since the function $f_j$ is even, we can assume that $x_m\ge0,\forall m=1\ \dots,d$. On the other hand, if $x={\bf0}$ then there is nothing to prove, so we can assume that at least one variable is not 0. Furthermore, the symmetry of the problem tells us that if we write
\[
\mathbb{R}^d_{+}=\bigcup_{m=1}^d\{x\in\mathbb{R}^d{+}:x_m=\max\{x_1,\dots,x_d\}\},
\]
then the proof in each region will be the same. Hence, we just need to see the convergence in the set where $x_1$ is the maximum of the components of $x$. Thanks to this consideration we have
\[
h_j=x_1/j,\ \ k_1^{(j)}=j,\ \ k_m^{(j)}=\left\lceil\frac{x_mj}{x_1}\right\rceil,
\] 
Once this has been settled, we can start proving the convergence. To begin with, we split the difference of the minimizer and the gaussian to get (notice that $e^{-z}\le 1$ and  $I_n(z)<I_0(z)$ when $n\in\mathbb{N},\ z>0$).
\[
\left|f_j(x)-e^{-|x|^2/2}\right|\le\left|\frac{I_j(j^2/x_1^2)}{I_0(j^2/x_1^2)}-e^{-x_1^2/2}\right|+\sum_{m=2}^d\left|\frac{I_{\lceil x_mj/x_1\rceil}(j^2/x_1^2)}{I_0(j^2/x_1^2)}-e^{-x_m^2/2}\right|.
\]

We are going to treat each piece of the last inequality separately, and the proof is the same for each part.  Here we show the proof of the part $m=2$, that is, we have to deal with
\[
\left|\frac{I_{\lceil x_2j/x_1\rceil}(j^2/x_1^2)}{I_0(j^2/x_1^2)}-e^{-x_2^2/2}\right|.
\]
For the first part, we are going to use two known asymptotic expressions for the modified Bessel function. One is a uniform asymptotic expression (see \cite[p.~377]{olver}) for $I_\nu(\nu z)$ when $\nu\rightarrow \infty$ valid in $0<z<\infty$ and the other one is an asymptotic expression for $I_0(t)$ when $t\rightarrow\infty$ (see \cite[p.~269]{olver},\cite[p.~203]{l}). More precisely,
\[
\left|\frac{\sqrt{2\pi\nu}(1+z^2)^{1/4}I_{\nu}(\nu z)}{e^{\nu\xi_z}}-1\right|\le \frac{3}{5\nu},
\]
where $\xi_z=\sqrt{1+z^2}+\log\left(\frac{z}{1+\sqrt{1+z^2}}\right)$. On the other hand,
\begin{equation}\label{eqi0}
\left|\frac{\sqrt{2\pi t}I_0(t)}{e^{t}}-1\right|\le \frac{1}{t}.
\end{equation}

Then, we take $M$ and $a>0$ to be chosen later, and in the sequel $C$ will denote a constant which only depends on $M$ and $a$. If $a\le x_2\le x_1\le M$, from the last two estimates we get   
\[
\left|\frac{\sqrt{2\pi\lceil x_2j/x_1\rceil}(1+j^4/x_1^4\lceil x_2j/x_1\rceil^2)^{1/4}I_{\lceil x_2j/x_1\rceil}(j^2/ x_1^2)}{e^{\lceil x_2j/x_1\rceil\xi_j}}-1\right|\le \frac{3}{5\lceil x_2 j/x_1\rceil}\le\frac{3x_1}{5x_2j}\le\frac{C}{j},
\]
\[
\left|\frac{\sqrt{2\pi j}I_0(j^2/ x_1^2)}{x_1e^{j^2/ x_1^2}}-1\right|\le \frac{x_1^2}{j^2}\le\frac{C}{j^2}.
\]

Here $\xi_j=\sqrt{1+j^4/x_1^4\lceil x_2j/x_1\rceil^2}+\log\frac{j^2}{x_1^2\lceil x_2j/x_1\rceil+\sqrt{ x_1^4\lceil x_2j/x_1\rceil^2+j^4}}.$ It is easy to check that the convergence of the minimizer to the Gaussian will be given by the study of,
\[
\left|\frac{e^{\lceil x_2j/x_1\rceil\xi_j-j^2/x_1^2+x_2^2/2}}{(1+x_1^4\lceil x_2j/x_1\rceil^2/j^4)^{1/4}}-1\right|.
\]

Now, using that $(1+z)^{-1/4}=1+O(z)$ we have
\[
z=\frac{\lceil x_2j/x_1\rceil^2x_1^4}{j^4}\le\frac{(x_2j/x_1+1)^2x_1^4}{j^4}\le\frac{C}{j^2},
\]
so
\[
(1+\lceil x_2j/x_1\rceil^2x_1^4/j^4)^{-1/4}=1+O(1/j^2).
\]

Notice that the big O notation gives us a constant which only depends on $M$ and $a$. On the other hand, when $0<z<1$, $\log(1-z)=-z+O\left(\frac{z^2}{(1-z)^2}\right)$ and we have $\log\frac{j^2}{ x_1^2\lceil x_2j/x_1\rceil+\sqrt{x_1^4\lceil x_2j/x_1\rceil^2+j^4}}=\log\left(1-\frac{ x_1^2\lceil x_2j/x_1\rceil+\sqrt{x_1^4\lceil x_2j/x_1\rceil^2+j^4}-j^2}{x_1^2\lceil x_2j/x_1\rceil+\sqrt{x_1^4\lceil x_2j/x_1\rceil^2+j^4}}\right)$, so since the logarithm is multiplied by $\lceil x_2j/x_1\rceil$, to study the error term we look at the quantity
\[
\frac{z\lceil x_2j/x_1\rceil^{1/2}}{1-z}=\frac{2j^2x_1^2\lceil x_2j/x_1\rceil^{3/2}}{j^4-j^2x_1^2\lceil x_2j/x_1\rceil+j^2\sqrt{x_1^4\lceil x_2j/x_1\rceil^2+j^4}}\le \frac{2x_1^{1/2}x_2^{3/2}}{j^{1/2}}+\frac{C}{j^{3/2}}\le\frac{C}{j^{1/2}}.
\]

Thus, manipulating the quotient inside the logarithm we get
\[\begin{aligned}
\lceil x_2j/x_1\rceil\xi_j-j^2/x_1^2+x_2^2/2&=x_2^2-\frac{2j^2x_1^2\lceil x_2j/x_1\rceil^{2}}{j^4+j^2x_1^2\lceil x_2j/x_1\rceil+j^2\sqrt{x_1^4\lceil x_2j/x_1\rceil^2+j^4}}\\&-\frac{x_2^2}{2}+\frac{j^2}{x_1^2}\left(\sqrt{x_1^4\lceil x_2j/x_1\rceil^2/j^4+1}-1\right)+O(1/j).
\end{aligned}\]

If we consider the first line, using now the Taylor expansion of $\sqrt{1+z}$ and the fact that $\lceil x_2j/x_1\rceil=x_2j/x_1+O(1)$, $\lceil x_2j/x_1\rceil^2=x_2^2j^2/x_1^2+O(j)$ we observe that it is bounded by $\frac{C}{j}$. In other words,
\[
x_2^2-\frac{2j^2x_1^2\lceil x_2j/x_1\rceil^{2}}{j^4+j^2x_1^2\lceil x_2j/x_1\rceil+j^2\sqrt{x_1^4\lceil x_2j/x_1\rceil^2+j^4}}=O(1/j).
\]

For the second part, we use that $\sqrt{1+z}=1+z/2+O(z^2)$, and it is easy to check that then
\[
-\frac{x_2^2}{2}+\frac{j^2}{x_1^2}\left(\sqrt{x_1^4\lceil x_2j/x_1\rceil^2/j^4+1}-1\right)=O(1/j),
\]
and finally we have
\[
\frac{e^{\lceil x_2j/x_1\rceil\xi_j-j^2/x_1^2+x_2^2/2}}{(1+\lceil x_2j/x_1\rceil^2x_1^4/j^4)^{1/4}}=(1+O(1/j^2))e^{O(1/j)}=1+O(1/j),
\]
or, in other words
\[
\left|\frac{e^{\lceil x_2j/x_1\rceil\xi_j-j^2/x_1^2+x_2^2/2}}{(1+\lceil x_2j/x_1\rceil^2x_1^4/j^4)^{1/4}}-1\right|\le \frac{C}{j}.
\]

Now we can use this estimate to go back to the quantity we want to control and conclude that
\[
\left|\frac{I_{\lceil x_2j/x_1\rceil}(j^2/ x_1^2)}{I_0(j^2/ x_1^2)}-e^{-x_2^2/2}\right|\le \frac{C}{j}.
\]

Hence, if we assume that $a\le x_m\le x_1\le M,\ \forall m=2,\dots,d$, there is $j_1$ such that if $j\ge j_1$,
\[
\left|f_j(x)-e^{-|x|^2/2}\right|\le \epsilon.
\]

Now we take $a$ and $M$ in order to have
\[
e^{-M^2/2}\le\epsilon,\ \ 1-e^{-a^2/2}\le\epsilon,\ 4a^2 e^{2a^2}\le\epsilon,
\]
and we will see that then, in the other regions of the set where $x_1$ is the maximum, we have that the difference between the minimizer and the Gaussian is less than $\epsilon$. First, for $x_1\le M$, we study the case when some variables are less than $a$. For all the variables $x_m$ bigger than $a$ we can repeat the proof of the first part, so we only have to deal with those variables that are less than $a$. We will assume here without loss of generality that $x_2$ is less than $a$. In this region we are going to use another estimate which can be deduced from an asymptotic expansion for $I_\nu(z)$ given in \cite[p.~269]{olver}. More precisely, 
\[
\left|\frac{\sqrt{2\pi z}I_\nu(z)}{e^z}-1\right|\le \frac{\pi(4\nu^2-1)}{8z}e^{\pi(4\nu^2-1)/8z}+e^{-2z}\left(1+\frac{4\nu^2-1}{4z}e^{(4\nu^2-1)/8z}\right).
\]

Therefore, when $\nu=\lceil x_2j/x_1\rceil$ and $z=j^2/x_1^2$, there is $j_2$ such that if $j\ge j_2$, for all $x_2<a,\ x_1\le M$ and $x_2<x_1$ we have
\[
\frac{4\nu^2-1}{z}\le \frac{4x_1^2}{j^2}\left(\frac{x_2j}{x_1}+1\right)^2\le 4a^2+\frac{C}{j}\le 5a^2,\  e^{-2z}\le e^{-2j^2/M^2}\le a^2e^{2a^2},
\]
hence we have
\[
\left|\frac{\sqrt{2\pi j}I_{\lceil x_2j/x_1\rceil}(j^2/x_1^2)}{x_1e^{j^2/x_1^2}}-1\right|\le \frac{5\pi a^2}{8}e^{5\pi a^2/8}+a^2 e^{2a^2}(1+5a^2e^{a^2/8}/4)\le 4a^2e^{2a^2}\le\epsilon.
\]

We can use this and \eqref{eqi0} to check that for $j$ big enough and independent of $x_1$ and $x_2$,
\[
\left|\frac{I_{\lceil x_2j/x_1\rceil}(j^2/ x_1^2)}{I_0(j^2/ x_1^2)}-e^{-x_2^2/2}\right|\le\left|\frac{I_{\lceil x_2j/x_1\rceil}(j^2/ x_1^2)}{I_0(j^2/ x_1^2)}-1\right|+1-e^{-x_2^2/2}\le 3\epsilon.
\]

We repeat this argument for all the variables that are less than $a$ in order to get the desired result. Thus, we have that there is $j_0$ such that if $j\ge j_0$, the difference between the minimizer and the Gaussian is less than $\epsilon$ for all $x\in\mathbb{R}^d_+$ such that $x_1\le M$ is the maximum variable. Now we have to check the case when $x_1> M$. In this case, we can bound $f_j$ using the following property of the modified Bessel functions:

\begin{lem}
$\frac{I_\nu(t)}{I_0(t)}$ is an increasing function for $t>0$.
\end{lem}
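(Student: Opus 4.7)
The plan is to reduce the claim to positivity of the Wronskian-type quantity
\[
W(t) := I_\nu'(t)\,I_0(t) - I_\nu(t)\,I_0'(t),
\]
since $\frac{d}{dt}\!\left(\frac{I_\nu(t)}{I_0(t)}\right) = W(t)/I_0(t)^2$ and $I_0>0$. The case $\nu=0$ is trivial, so I work with $\nu\ge 1$, which is the only case relevant to the application.

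The main step is to produce a clean first-order ODE for $W$ using the modified Bessel equation $t^2 y'' + t y' = (t^2 + k^2)y$. Differentiating $W$, the $I_\nu' I_0'$ terms cancel and one gets $W' = I_\nu'' I_0 - I_\nu I_0''$. Substituting $I_\nu'' = (1 + \nu^2/t^2)I_\nu - I_\nu'/t$ and $I_0'' = I_0 - I_0'/t$, the $I_\nu I_0$ terms cancel and the pieces involving $I_\nu' I_0$, $I_\nu I_0'$ assemble into $-W/t$. This yields
\[
W'(t) + \frac{W(t)}{t} = \frac{\nu^2}{t^2}\,I_\nu(t)\,I_0(t),
\qquad\text{equivalently}\qquad (tW(t))' = \frac{\nu^2}{t}\,I_\nu(t)\,I_0(t),
\]
and for $\nu\ge 1$, $t>0$ the right-hand side is strictly positive.

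To close the argument I would check that $tW(t)\to 0$ as $t\to 0^+$ using the power-series expansions $I_\nu(t) = (t/2)^\nu/\nu! + O(t^{\nu+2})$ and $I_\nu'(t) = (t/2)^{\nu-1}/[2(\nu-1)!] + O(t^{\nu+1})$, which give $W(t) \sim (t/2)^{\nu-1}/[2(\nu-1)!]$ near zero. Integrating from $0$ to $t$ then produces
\[
tW(t) = \nu^2 \int_0^t \frac{I_\nu(s)\,I_0(s)}{s}\,ds > 0,
\]
the integrand being integrable at $0$ because $I_\nu(s)/s \sim c\,s^{\nu-1}$. Hence $W(t)>0$ for all $t>0$ and $I_\nu(t)/I_0(t)$ is strictly increasing.

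The only mildly delicate point is the boundary check at $t=0$ in the borderline case $\nu=1$, where $W$ itself does not vanish but merely stays bounded (so that $tW\to 0$); this is immediate from the expansions above. Every other step is a direct calculation driven by the Bessel ODE.
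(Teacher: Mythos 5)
Your proof is correct, but it takes a different route from the paper. Both arguments begin by reducing the claim to positivity of the same quantity $W(t)=I_\nu'(t)I_0(t)-I_\nu(t)I_0'(t)$; the paper then divides by $I_\nu I_0$ and disposes of the problem by citing Amos's result that the logarithmic derivatives $Y_\nu(t)=I_\nu'(t)/I_\nu(t)$ satisfy $Y_{\nu+1}(t)>Y_\nu(t)$ for $t>0$, $\nu\ge 0$, whence $Y_\nu>Y_0$ by iteration. You instead make the positivity of $W$ self-contained: the Wronskian-type ODE $(tW)'=\frac{\nu^2}{t}I_\nu I_0$ follows correctly from the modified Bessel equation (the cross terms cancel exactly as you say), the boundary behaviour $tW(t)\to 0$ as $t\to 0^+$ is justified by the series expansions including the borderline case $\nu=1$ where $W$ tends to a nonzero constant, and the integral representation $tW(t)=\nu^2\int_0^t s^{-1}I_\nu(s)I_0(s)\,ds$ then gives strict positivity. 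Your version costs a short computation but removes the external citation and works directly for any real order $\nu>0$ rather than only for integer orders reached by iterating $Y_{\nu+1}>Y_\nu$ from $\nu=0$; the paper's version is shorter on the page but rests entirely on the quoted inequality from Amos. For the application in the paper (integer orders $\nu=j$ or $\nu=\lceil x_2 j/x_1\rceil$) both suffice.
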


\textit{Proof.} Differentiating we have
\[
\frac{I_\nu'(t)}{I_0(t)}-\frac{I_\nu(t)I_0'(t)}{I_0^2(t)}>0\Longleftrightarrow Y_\nu(t)=\frac{I_\nu'(t)}{I_\nu(t)}>Y_0(t),
\]
and in \cite{amos} the author proved that $Y_{\nu+1}(t)>Y_\nu(t)$ for $t>0$ and $\nu\ge0$. \qed

Hence, by the Lemma and the first part of the proof we have (notice again that $I_n(z)<I_0(z)$)
\[
f_j(x)\le \frac{I_j(j^2/x_1^2)}{I_0(j^2/x_1^2)}\le \frac{I_j(j^2/M^2)}{I_0(j^2/M^2)}\le e^{-M^2/2}+\epsilon\le2\epsilon,
\]
while, on the other hand $e^{-|x|^2/2}\le e^{-M^2/2}\le \epsilon,$ so, finally we get that, if $x_1>M$,
\[
\left|f_j(x)-e^{-|x|^2/2}\right|\le \max\{f_j(x),e^{-|x|^2/2}\}\le 2\epsilon.
\]

Thus, we have covered all the posibilities when the maximum variable is $x_1$. Since we can repeat this process for all the variables, the desired result holds. \qed

\begin{rem}
Using the uniform convergence and a proper bound for $f_j$, we can also see that the convergece holds in $L^1(\mathbb{R}^d)$, and therefore, by interpolation, we have convergece in $L^p(\mathbb{R}^d)$ for all $p\in[1,\infty]$.
\end{rem}

Since we have this duality between this uncertainty principle and the uncertainty principle for periodic functions, we can also see the convergence of the periodic minimizer to the Gaussian by letting $h$ tend to zero, and then the period of the periodic function goes to infinity. In this direction, the convergence is proved in \cite{g}, where the authors do not use our parameter $h$ and let $\alpha$ tend to zero. Nevertheless, we can introduce $h$ in their proof and use the same argument to have the convergence to the gaussian when $h$ tends to zero.

This uncertainty principle \eqref{eq006} is not new, as we have pointed out in the introduction. In \cite{b}, the authors used the inequality we have used here (in one dimension and in the Fourier space). Since they also considered the position operator given by $\cos(x)$, they presented another uncertainty relation in their paper. In order to get convergence, we put the uncertainty relation in the following way
\begin{equation}\label{eq010}
2\left(\int_{-\pi/h}^{\pi/h}|f'|^2\right)^{1/2}\left(\int_{-\pi/h}^{\pi/h}\left|\cos(xh)f\right|^2\right)^{1/2}\ge\left|h\int_{-\pi/h}^{\pi/h}\sin(xh)|f|^2\right|.
\end{equation}

Although in this case, when $h$ tends to zero this relation does not converge to any uncertainty relation since the right-hand side goes to zero, the study of this relation in the discrete setting can be interesting.

The discrete operators which give this inequality are
\[
\widetilde{\mathcal{S}_h}u_k=khu_k,\ \ \ \widetilde{\mathcal{A}_h}u_k=i\frac{u_{k+1}+u_{k-1}}{2}.
\]

Notice that we multiply by $i$ so that $\widetilde{\mathcal{A}_h}$ is skew-symmetric and now $d=1$ so $k$ is a \textit{number}, not a \textit{tuple}. The continuous versions of these operators are 
\[
\widetilde{\mathcal{S}}f=xf,\ \ \ \widetilde{\mathcal{A}}f=i f,
\]
and we see here that in the continuous setting we do not have an analogous uncertainty relation since these operators commute and we would have the relation
\[
2\left(\int_{\mathbb{R}}|xf|^2\right)^{1/2}\left(\int_{\mathbb{R}}|f|^2\right)^{1/2}\ge 0.
\]

If we calculate the minimizing sequence, it corresponds in Fourier  with the periodic function $g(x)=C e^{\sin(xh)/\alpha h}$ and in the sequence space with $\omega_k^h=C_{\alpha,h}i^{-k}I_k\left(1/\alpha h\right)$, so we have almost the same sequence (forgetting the $h$) we had before. In Fourier, it is quite easy to check that the minimizing function goes to zero when $h$ goes to zero. It makes sense to have the same mimizing sequence with a factor $i^k$ since we can go (assume for a moment $h=1$ and $d=1$) from \eqref{eq007} to \eqref{eq010} by doing the change of variables $y=x-\pi/2$ which gives the factor $i^k$ in the sequence space.

The difference between the two cases is that in the first case the minimizing's center was fixed, but now it depends on $h$, as we can see in Figure \ref{im1}. Moreover, in the first case the value at the center of the Gaussian goes to a constant, but in the second case it goes to zero.

\begin{figure}[\h]\centering
\begin{subfigure}[b]{0.4\textwidth}
\includegraphics[scale=0.5]{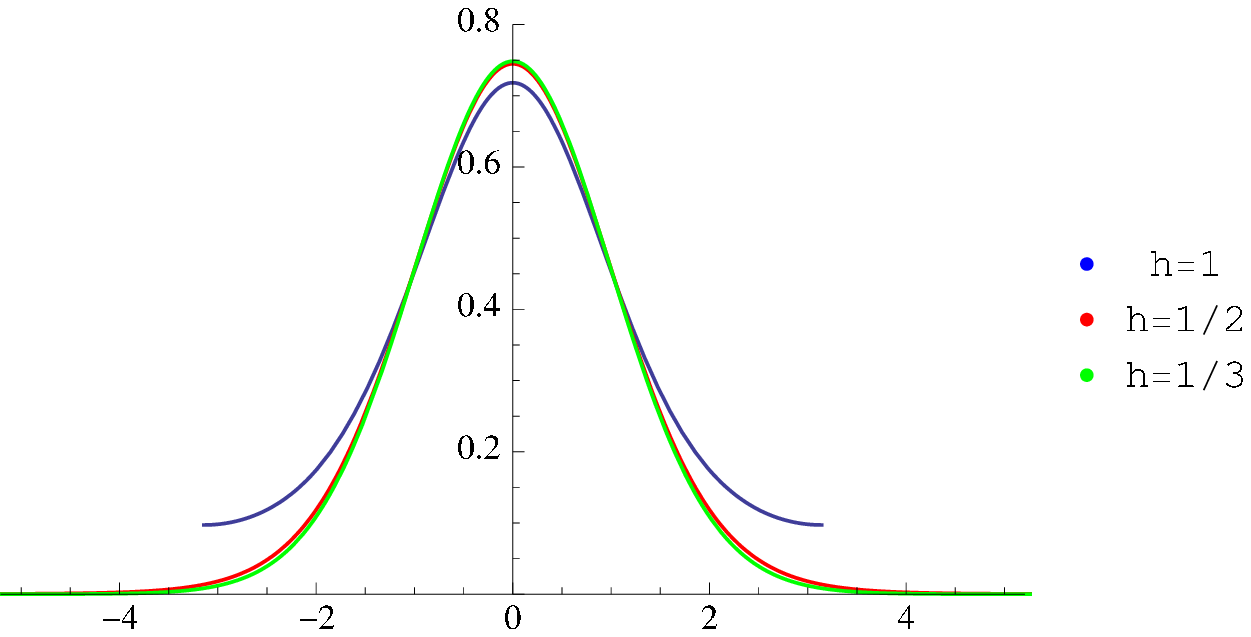}
\end{subfigure}\qquad\qquad\begin{subfigure}[b]{0.4\textwidth}
\includegraphics[scale=0.4]{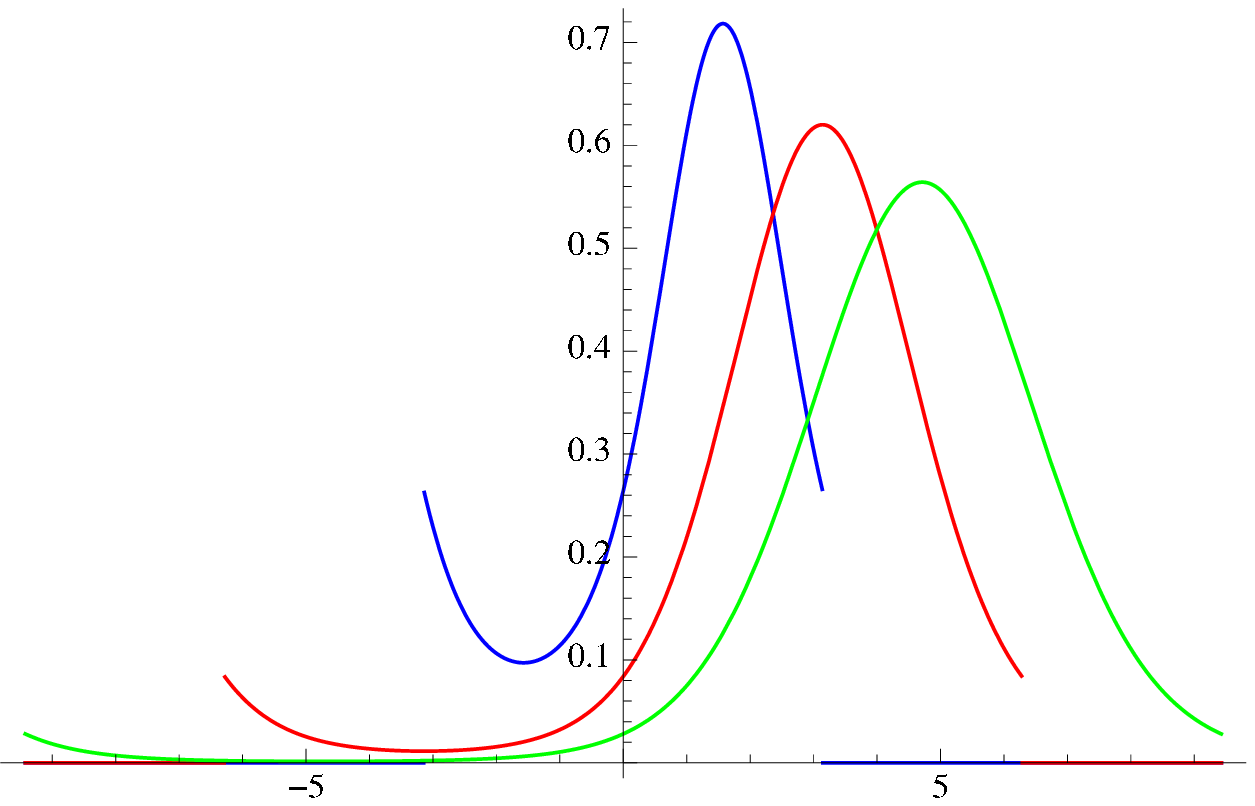}
\end{subfigure}
\caption{Minimizing function $f(x)$ (left) and $g(x)$ (right) \label{im1}}
\end{figure}

In this case the discrete uncertainty relation is

\begin{equation}\label{eq011}\begin{aligned}
\left|h^2\sum_{k=-\infty}^\infty \frac{u_{k+1}-u_{k-1}}{2}\overline{u_k}\right|&\\ \le2&\left(\sumhz|khu_k|^2\right)^{1/2}\left(\sumhz \left|\frac{u_{k+1}+u_{k-1}}{2}\right|^2\right)^{1/2}.
\end{aligned}\end{equation}

\section{Virial identity}

In this section we are going to give a \textbf{discrete Virial identity} equivalent to \eqref{eq002}, which relates evolution equations to the inequalities \eqref{eq006} and \eqref{eq011}. Then we will use it to obtain a dynamic uncertainty principle equivalent to \eqref{eq003}.

First of all we define the discrete Laplacian as the composition of the backward and the forward difference operators, that is,
\[
\Delta_du_k=\sum_{j=1}^d\frac{u_{k+e_j}-2u_k+u_{k-e_j}}{h^2}=\sum_{j=1}^d\partial_j^+\partial_j^-u_k,
\]
where
\[
\partial_j^+ u_k=\frac{u_{k+e_j}-u_k}{h}\Rightarrow \partial_j^-u_k=\frac{u_{k}-u_{k-e_j}}{h}.
\]

Notice that $(\partial_j^+)^*=-\partial_j^-$.

We have the following result, equivalent to \eqref{eq002},\eqref{eq003}:
\begin{teo}[Dynamic discrete uncertainty principle]\label{teo31}
Assume $u=(u_k(t))_k$ is a solution to the discrete Schrödinger equation
\[
\left\{\begin{array}{lr}\partial_tu_k=i\Delta_du_k,& k\in\mathbb{Z}^d,\ t\in\mathbb{R},\\u_k(0)=u_k^0,\end{array}\right.
\]
where $u^0=(u_k^0)_k\in H(\mathbb{Z}^d)$ such that
\begin{equation}\label{eq012}
\sumhzd\sum_{j=1}^du_k^0\overline{u_{k+e_j}^0}=2.
\end{equation}

For a real $\phi=(\phi_k)_{k\in\mathbb{Z}^d}$ we define
\[
F(t)=\sumhzd \phi_k|u_k(t)|^2,\ \ a=\sumhzd|khu_k^0|^2<+\infty,\ \ b=\sumhzd\sum_{j=1}^d\left|\frac{u_{k+e_j}^0-u_{k-e_j}^0}{2h}\right|^2<+\infty.
\]

Then, if $\phi_k$ is of the form $\phi_{k_1}+\dots+\phi_{k_d}$,
\begin{equation}\label{eq013}
\ddot{F}(t)=4\sumhzd D^2\phi_k\Ah u_k\overline{\Ah u_k}-\sumhzd\Delta_d^2\phi_k|u_k|^2,
\end{equation}
where
\[
D^2\phi_k=\left(\begin{array}{ccc}\partial_1^+\partial_1^-\phi_k&&0\\
&\ddots&\\
0&&\partial_d^+\partial_d^-\phi_k\end{array}\right).
\]

Moreover, if $\phi_k=(k_1^2+\cdots+k_d^2)h^2$,
\begin{equation}\label{eq014}
F(t)=a+4bt^2\ge a+\frac{4t^2}{a},
\end{equation}
and, if these two convex parabolas intersect each other, they are the same parabola and the initial datum is $u_k^0=\omega_k^h=C_{\alpha,h}I_k\left(1/\alpha h^2\right)$, being the solution
\begin{equation}\label{eq015}
u_k(t)=e^{it\Delta_d}\omega_k^h=\omega_k^h(t)= e^{-2dit/h^2}C_{\alpha,h}\prod_{j=1}^dI_{k_j}\left(\frac{1+2\alpha it}{\alpha h^2}\right).
\end{equation}
\end{teo}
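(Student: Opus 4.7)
I would model the argument on the proof of Theorem~\ref{teo11}, replacing integration by parts with discrete summation by parts throughout.

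\emph{Step 1 (Virial identity).} Differentiating under the sum and using the Schr\"odinger equation gives
\[
\dot F(t) = 2\, h^d\,\Im \sum_{k\in\mathbb{Z}^d}\phi_k\,\overline{u_k}\,\Delta_d u_k.
\]
I would then apply the discrete Leibniz rule $\partial_j^-(\phi_k\overline{u_k}) = \phi_k\,\partial_j^-\overline{u_k} + (\partial_j^-\phi_k)\,\overline{u_{k-e_j}}$ together with the adjoint relation $(\partial_j^+)^* = -\partial_j^-$ to move one derivative off $u$ and onto $\phi$; the real term $\sum \phi_k|\partial_j^- u_k|^2$ vanishes under $\Im$. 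Differentiating $\dot F$ once more and repeating the same procedure---this time organising the gradients as centered differences so that $\Ah u$ appears explicitly---collects a Hessian-type contribution $\sum D^2\phi_k\,\Ah u_k\cdot \overline{\Ah u_k}$ together with a fourth-difference contribution $-\sum \Delta_d^2\phi_k |u_k|^2$. The separable form $\phi_k = \phi_{k_1}+\cdots+\phi_{k_d}$ kills all off-diagonal entries of the discrete Hessian, leaving the diagonal matrix $D^2\phi_k$ in the statement, and yields \eqref{eq013}.

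\emph{Step 2 (parabolic bound).} For $\phi_k = |kh|^2 = h^2(k_1^2+\cdots+k_d^2)$ a direct computation gives $\partial_j^+\partial_j^-\phi_k = 2$ for every $j$, so $D^2\phi_k = 2\,\mathrm{Id}$, and $\Delta_d^2\phi_k = 0$ since $\Delta_d\phi_k = 2d$ is constant. Hence \eqref{eq013} reduces to $\ddot F(t) = 8\,h^d\sum_k |\Ah u_k(t)|^2$. Because $\Ah$ and $e^{it\Delta_d}$ are both Fourier multipliers on the torus $[-\pi/h,\pi/h]^d$ and the latter has unimodular symbol, $\|\Ah u(t)\|_{\ell^2}$ is conserved, so $\ddot F\equiv 8b$. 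Under the natural symmetry assumption $\dot F(0)=0$ (automatic for the real, even minimizer to which the equality analysis reduces), two integrations give $F(t)=a+4bt^2$. The uncertainty principle \eqref{eq006} applied to $u^0$ together with the normalization \eqref{eq012} yields $ab\ge 1$, i.e., $b\ge 1/a$, and the lower bound $F(t)\ge a+4t^2/a$ follows.

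\emph{Step 3 (equality and explicit evolution).} If the two parabolas meet then, both being convex with the same value $a$ at $t=0$, they must coincide, forcing $4b=4/a$ and hence equality in \eqref{eq006}. By Section~2 this pins down $u^0=\omega^h$, whose Fourier symbol is $f(x)=C\exp\bigl(\sum_j\cos(x_jh)/(\alpha h^2)\bigr)$. Since the Fourier multiplier of $\Delta_d$ is $(2/h^2)\bigl(\sum_j\cos(x_jh)-d\bigr)$, applying $e^{it\Delta_d}$ yields
\[
u(x,t) = Ce^{-2idt/h^2}\exp\!\Biggl(\sum_{j=1}^d\frac{(1+2\alpha it)\cos(x_jh)}{\alpha h^2}\Biggr),
\]
and the integral representation $I_m(z) = \tfrac{1}{\pi}\int_0^\pi e^{z\cos\theta}\cos(m\theta)\,d\theta$ applied coordinatewise to the $k$-th Fourier coefficient of $u(\cdot,t)$ recovers \eqref{eq015} after absorbing the overall factor $(2\pi/h)^d$ into $C_{\alpha,h}$.

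\emph{Main obstacle.} The most delicate step is the bookkeeping in Step~1. Unlike in the continuous case, the discrete Leibniz rule carries index shifts (the factor $\phi_k\overline{u_k}$ becomes $\phi_{k-e_j}\overline{u_{k-e_j}}$ at one place and not at another), producing residual terms that must be rearranged so that the centered operator $\Ah$ appears rather than a one-sided difference, and one must verify that the spurious imaginary contributions genuinely cancel after the second summation by parts. A useful auxiliary identity is the symmetric form $2\Re(u_{k+e_j}\overline{u_k}) = |u_k|^2 + |u_{k+e_j}|^2 - h^2|\partial_j^+u_k|^2$, which is already implicit in the passage from \eqref{eq005} to \eqref{eq006} and which converts cross-terms into the desired quadratic quantities.
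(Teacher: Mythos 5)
Your outline follows the paper's own proof quite closely: the same summation-by-parts strategy for \eqref{eq013}, the same conservation argument for $\ddot F$ (the centered difference $\Ah u$ solves the same equation as $u$, and the propagator is a unimodular Fourier multiplier, so $b$ is conserved and $\dddot F=0$), the same normalization of $\dot F(0)$, and the same reduction of the equality case to the minimizer of \eqref{eq006}. The only genuinely different route is your derivation of \eqref{eq015}: the paper works in the sequence space and uses the Bessel addition theorem $\sum_m I_m(z)I_{k-m}(w)=I_k(z+w)$ applied to the fundamental solution, whereas you exponentiate the symbol $(2/h^2)\sum_j(\cos(x_jh)-1)$ directly on $C\exp\bigl(\sum_j\cos(x_jh)/\alpha h^2\bigr)$ and read off the coefficients from the integral representation of $I_m$ (valid for complex argument by analytic continuation). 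Both are correct; yours is arguably the shorter computation.

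However, the step you flag as the ``main obstacle'' is a genuine gap, and it is precisely where the paper's proof contains its one nontrivial device. Starting from $\dot F(t)=-2\Im\sum_j\sum_k\partial_j^+\phi_k\,u_k\,\overline{\partial_j^+u_k}$ and differentiating again using only forward differences, you do \emph{not} land on \eqref{eq013}: you obtain the Hessian term in the asymmetric form $4\Re\sum_k\partial_j^+\partial_j^-\phi_k\,\partial_j^su_k\,\overline{\partial_j^+u_k}$ rather than with $|\Ah u_k|^2$, together with residual terms of the type $h\sum_k\partial_j^+\partial_j^-\partial_j^+\phi_k\,|\partial_j^+u_k|^2$ and $h\sum_k\partial_j^+\partial_j^-\partial_j^+\phi_k\,\partial_j^+\partial_j^-u_k\,\overline{u_k}$, which do not individually vanish. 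The paper's resolution is a symmetrization: carry out the entire computation a second time with the roles of $\partial_j^+$ and $\partial_j^-$ interchanged, and average the two resulting expressions. The averaging converts $\partial_j^su_k\bigl(\overline{\partial_j^+u_k}+\overline{\partial_j^-u_k}\bigr)/2$ into $|\partial_j^su_k|^2$ via \eqref{eq016}, cancels the $\partial_j^+\partial_j^-u_k\,\overline{u_k}$ contributions (using $\partial_j^+\phi-\partial_j^-\phi=h\partial_j^+\partial_j^-\phi$), and reduces the remainder to $h\sum_k\partial_j^+\partial_j^-\partial_j^+\phi_k|\partial_j^+u_k|^2-h\sum_k\partial_j^-\partial_j^+\partial_j^-\phi_k|\partial_j^-u_k|^2$, which vanishes after shifting the summation index by $e_j$. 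Without this (or an equivalent) mechanism your Step~1 does not close; the auxiliary identity $2\Re(u_{k+e_j}\overline{u_k})=|u_k|^2+|u_{k+e_j}|^2-h^2|\partial_j^+u_k|^2$ is what passes from \eqref{eq005} to \eqref{eq006}, but it does not by itself dispose of these third-difference remainders in the Virial computation.
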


\begin{rem} Observe that the Hessian is a diagonal matrix since  $\partial_j^{\pm}\partial_{l}^{\pm}\phi_k=0$ for $j\ne l$. If we do not make this extra assumption on $\phi$, we get some extra terms in the expression of $\ddot{F}(t).$\end{rem}

\begin{rem} Notice that now $C_{\alpha,h}$ changes because of the normalization condition \eqref{eq012}.\end{rem}

\begin{proof} For convenience, we are going to use the notation that we have used in the continuous case. That is, we say that $u$ satisfies $\partial_tu=i\sum_{j=1}^d\partial_j^+\partial_j^- u$ and we denote $F(t)=\int \phi(x)|u(x,t)|^2\,dx$.

We will need a discrete Leibniz rule, and there are many ways to write this discrete rule. For example,
\[
\partial_j^+(\phi_ku_k)=\frac{\phi_{k+e_j}u_{k+e_j}-\phi_ku_k}{h}=\frac{\phi_{k+e_j}-\phi_k}{h}u_k+\frac{u_{k+e_j}-u_k}{h}\phi_k+h\frac{u_{k+e_j}-u_k}{h}\frac{\phi_{k+e_j}-\phi_k}{h},
\]
which we denote $\partial_j^+(\phi u)=u\partial_j^+\phi+\phi\partial_j^+ u+h\partial_j^+ u\partial_j^+\phi$. In the same way, we have another Leibniz rule for $\partial_j^-$, $\partial_j^-(\phi u)=u\partial_j^-\phi+\phi\partial_j^- u-h\partial_j^- u\partial_j^-\phi$. Moreover, we have
\begin{equation}\label{eq016}\begin{aligned}
\partial_j^+ u+\partial_j^-u&=2\partial_j^s u\text{ (symmetric difference operator),}
\\ \partial_j^+ u-\partial_j^- u&=h\partial_j^+\partial_j^- u.
\end{aligned}\end{equation}

Then, taking a time derivative we have, formally
\[
\dot{F}(t)=2\Re\int\phi u\overline{\partial_t u}=2\Im\sum_{j=1}^d\int \phi u\overline{\partial_j^+\partial_j^- u}=-2\Im\sum_{j=1}^d \int\partial_j^+(\phi u)\overline{\partial_j^+ u}=-2\Im\sum_{j=1}^d\int \partial_j^+\phi u\overline{\partial_j^+ u}.\]

Taking another derivative and using the assumption on $\phi_k$, the Leibniz rule and \eqref{eq016},
\[\begin{aligned}
\ddot{F}(t)&=4\Re\sum_{j=1}^d\int\partial_j^+\partial_j^-\phi\partial_j^s u\overline{\partial_j^+ u}-\sum_{j=1}^d\int(\partial_j^+\partial_j^-)^2\phi|u|^2+h\sum_{j=1}^d\int\partial_j^+\partial_j^-\partial_j^+\phi \partial_j^+\partial_j^- u\overline{u}\\&\ \ \ +2h\sum_{j=1}^d\int \partial_j^+\partial_j^-\partial_j^+\phi|\partial_j^+ u|^2+h\sum_{j=1}^d\int(\partial_j^+\partial_j^-)^2\phi\partial_j^-u\overline{u}.
\end{aligned}\]

On the other hand, following the same procedure, but interchanging the role of $\partial_j^+$ and $\partial_j^-$, we get
\[\begin{aligned}
\ddot{F}(t)&=4\Re\sum_{j=1}^d\int\partial_j^+\partial_j^-\phi\partial_j^s u\overline{\partial_j^- u}-\sum_{j=1}^d\int(\partial_j^+\partial_j^-)^2\phi|u|^2-h\sum_{j=1}^d\int\partial_j^-\partial_j^+\partial_j^-\phi \partial_j^+\partial_j^- u\overline{u}\\&\ \ \ -2h\sum_{j=1}^d\int \partial_j^-\partial_j^+\partial_j^-\phi|\partial_j^- u|^2-h\sum_{j=1}^d\int(\partial_j^+\partial_j^-)^2\phi\partial_j^+u\overline{u}.
\end{aligned}\]

The sum of these two formulae and \eqref{eq016} gives 
\[\begin{aligned}
2\ddot{F}(t)&=8\int D^2\phi\Ah u\overline{\Ah u}-2\int\Delta_d^2\phi|u|^2\\&\ \ \ +2h\sum_{j=1}^d\int \partial_j^+\partial_j^-\partial_j^+\phi|\partial_j^+ u|^2-2h\sum_{j=1}^d\int \partial_j^-\partial_j^+\partial_j^-\phi|\partial_j^- u|^2.
\end{aligned}\]

Finally, we notice that, for $j=1,\dots,d$,
\[
2h\int \partial_j^+\partial_j^-\partial_j^+\phi|\partial_j^+ u|^2-2h\int \partial_j^-\partial_j^+\partial_j^-\phi|\partial_j^- u|^2=0.\]

Hence,
\[
\ddot{F}(t)=4\sumhzd D^2\phi_k\Ah u_k\overline{\Ah u_k}-\sumhzd\Delta_d^2\phi_k|u_k|^2.
\]

Now, as in the continuous case, we take $\phi_k=|x_k|^2=|kh|^2=h^2(k_1^2+\dots+k_d^2)$, the discretization of $|x|^2$, and we get the two terms of the right-hand side of \eqref{eq006}. Indeed,
\[
D^2\phi_k=2I_{d},\ \ \Delta_d^2\phi_k=0,
\]
where $I_d$ is the identity matrix of order $d\times d$. Then,
\[\begin{aligned}
F(t)&=\sumhzd|khu_k(t)|^2,\\ \ \ddot{F}(t)&=8\sumhzd\sum_{j=1}^d\left|\frac{u_{k+e_j}-u_{k-e_j}}{2h}\right|^2.
\end{aligned}\]

Moreover, $\dddot{F}(t)=0$.

We can see this fact looking at our equation in the Fourier space. If we consider $u_k(t)=\hat{f}(k,t)$, the equation $\partial_t u_k=i\Delta_d u_k$ is equivalent to
\[
\partial_t f(x,t)=i\sum_{j=1}^d\frac{2\cos(x_jh)-2}{h^2}f(x,t)=-4i\sum_{j=1}^d\frac{\sin^2(x_jh/2)}{h^2}f(x,t),\] 
whose solution is
\[
f(x,t)=\exp\left(-4i\sum_{j=1}^d\frac{\sin^2(x_jh/2)t}{h^2}\right)f(x,0).
\]

Then it is quite obvious that the $L^2[-\frac{\pi}{h},\frac{\pi}{h}]^d$ norm of $f$ is preserved, and so it is the $\ell^2$ norm of $u$. Since $\Big((u_{k+e_j}-u_{k-e_j})/2h\Big)_k$ satisfies the same equation, $\ddot{F}(t)=\ddot{F}(0)$. In the same way, we can see that the normalization condition \eqref{eq012} is also preserved with the time. To make these calculations rigorous we also use the equation in the Fourier space. Then, thanks to the expression of the solution, it is quite easy to check that
\begin{equation}\label{eq017}
F(t)=\|\nabla f(t)\|_{L^2[-\pi/h,\pi/h]}\le C(t\|f(0)\|_{L^2[-\pi/h,\pi/h]}+\|\nabla f(0)\|_{L^2[-\pi/h,\pi/h]})<+\infty,
\end{equation}
since $u^0$ is in $H(\mathbb{Z}^d)$. In fact, we can refine these estimate of $F(t)$ to give it in terms of $a$ and $b$, but, since we are working on discrete spaces, $b$ is controlled by $\|u^0\|_2$.

These facts allow us to write $F(t)$ as a convex parabola, and, we can assume without loss of generality $\dot F(0)=0$ (if not, we make a translation in time). Then $F(t)=a+4bt^2$. Furthermore, by \eqref{eq006} and \eqref{eq012}, the coefficients of this parabola satisfy the inequality
\[\sqrt{ab}\ge 1,\]
so \eqref{eq014} holds.

As in the continuous case, if the equality holds, then we know that for some $\alpha$, $(u_k^0)_k=(\omega_k^h)_k$ is the minimizing sequence. If we solve the equation
\[
\left\{\begin{array}{lr}\partial_tu_k(t)=i\Delta_du_k(t),&k\in\mathbb{Z}^d,\ t\in\mathbb{R},\\ u_k(0)=\omega_k^h= C_{\alpha,h}\prod_{j=1}^dI_{k_j}\left(1/\alpha h^2\right),\end{array}\right.
\]
we get, by properties of the modified Bessel functions (see \cite{l})
\[\begin{aligned}
e^{it\Delta_d}\omega_k^h=u_k(t)&= e^{-2dit/h^2}C_{\alpha,h}\prod_{j=1}^d\sum_{m_j\in\mathbb{Z}}I_{m_j}\left(\frac{1}{\alpha h^2}\right)I_{k_j-m_j}\left(\frac{2it}{h^2}\right)\\&= e^{-2dit/h^2}C_{\alpha,h}\prod_{j=1}^dI_{k_j}\left(\frac{1+2\alpha it}{\alpha h^2}\right),
\end{aligned}\]
hence, the solution has the same form as the initial datum, both are products of modified Bessel functions of the first kind.\end{proof}

\begin{rem} For $\gamma\in\mathbb{R}$, if we consider the equation
\[
\partial_tu_k(t)=i\sum_{j=1}^d\frac{u_{k+e_j}+\gamma u_k+u_{k-e_j}}{h^2},
\]
and we repeat all the calculations, we get to the same result. Note that multiplying $u_k$ by an appropriate exponential term we can reduce this equation to
\[
\partial_tv_k(t)=i\sum_{j=1}^d\frac{v_{k+e_j}+v_{k-e_j}}{h^2},
\]
so dealing with this equation is enough to see the general case with $\gamma$, and, in particular, the discrete Schr\"{o}dinger equation $(\gamma=-2)$.\end{rem}

Since in Fourier we also have the other periodic uncertainty principle, one can think that this new discrete uncertainty relation \eqref{eq011}, although it is not a discrete version of Heisenberg uncertainty principle, satisfies another Virial identity, but the natural choice for the equation (the composition of the ``backward summation operator'' and the ``forward summation operator'' fails, fact that we have pointed out in the previous remark, since it would be the case $\gamma=2$.

The question then is:
Is there any equation (we restrict ourselves to the one dimensional case) $\partial_tu_k=\mathcal{T}u_k$ $\ell^2(\mathbb{Z})$-invariant such that
\[
F(t)=\sumhz|khu_k(t)|^2\Rightarrow \ddot F(t)=C\sumhz\left|\frac{u_{k+1}(t)+u_{k-1}(t)}{2}\right|^2,\ \ \dddot F(t)=0\ ?
\]

We present here two equations that answer the question in an affirmative way. Since these two equations are not very different, we present a general result and later we will talk about the equations in detail.

\begin{teo}\label{teo32}
Assume $u=(u_k(t))_k$ and $v=(v_k(t))_k$ satisfy 
\[\left\{\begin{array}{ll}
\partial_tu_k=i\dfrac{v_{k+1}-v_{k-1}}{2h},& \partial_tv_k=-i\dfrac{u_{k+1}-u_{k-1}}{2h},\\u_k(0)=u_k^0,& v_k(0)=v_k^0,\end{array}\right.
\]
 where $u^0=(u_k^0)_k,\ v=(v_k^0)_k\in H(\mathbb{Z})$ and
 \[h^2\sum_{k=-\infty}^\infty \frac{u_{k+1}^0-u_{k-1}^0}{2}\overline{u_k^0}=2.\]
 
 We define
 \[
 F(t)=\sumhz k^2h^2|u_k(t)|^2,\ \ a=\sumhz k^2h^2|u_k^0|^2<+\infty,\ \ b=\sumhz\left|\frac{u_{k+1}^0+u_{k-1}^0}{2}\right|^2<+\infty.
 \]
 
Then, if $\ \forall k\in\mathbb{Z},\ \forall t,\ |u_k(t)|^2=|v_k(t)|^2$ and  $\Re(u_{k+1}(t)\overline{u_{k-1}(t)})=\Re(v_{k+1}(t)\overline{v_{k-1}(t)})$,
\[
\ddot F(t)=2\sumhz\left|\frac{u_{k+1}(t)+u_{k-1}(t)}{2}\right|^2,\ \ \dddot F(t)=0,
\]
and the system is $\ell^2(\mathbb{Z})$-invariant. Moreover,
\[
F(t)=a+bt^2\ge a+\frac{t^2}{a},
\]
and, if these two parabolas intersect each other, they are the same parabola and the initial datum $u^0$ is the minimizer of \eqref{eq011}.
\end{teo}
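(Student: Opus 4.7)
Following Theorem~\ref{teo31}, the plan is to compute $\ddot F$, show $\dddot F=0$ so that $F$ is a quadratic polynomial in $t$, and then apply the uncertainty relation \eqref{eq011} together with the normalization on $u^0$ to bound its coefficients.

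Write $\partial^s u_k=(u_{k+1}-u_{k-1})/(2h)$ and $Mu_k=(u_{k+1}+u_{k-1})/2$. The two standing hypotheses, combined via $|a\pm b|^2=|a|^2+|b|^2\pm 2\Re(a\overline b)$, immediately yield the pointwise identities $|\partial^s u_k|^2=|\partial^s v_k|^2$ and $|Mu_k|^2=|Mv_k|^2$. For the $\ell^2$-invariance, I will differentiate $|u_k|^2+|v_k|^2$ with the equations, sum over $k$, and use the skew-adjointness $h\sum\overline{u_k}\,\partial^s v_k=-h\sum\overline{\partial^s u_k}\,v_k$; the time derivative then becomes the imaginary part of a real quantity, which vanishes, and the assumption $|u_k|^2=|v_k|^2$ upgrades this to separate conservation of $h\sum|u_k|^2$ and $h\sum|v_k|^2$.

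For $\ddot F$, I will differentiate $F(t)=h\sum k^2h^2|u_k|^2$ twice using $\partial_t u_k=i\partial^s v_k$ and $\partial_t v_k=-i\partial^s u_k$, and perform a double summation by parts with the discrete Leibniz rule
\[
\partial^s(\phi u)_k=\partial^s\phi_k\cdot Mu_k+M\phi_k\cdot\partial^s u_k,
\]
applied to $\phi_k=k^2h^2$ (so $\partial^s\phi_k=2kh$ and $M\phi_k=(k^2+1)h^2$). Extracting the real parts and using the telescoping identity $\sum k(|u_{k+1}|^2-|u_{k-1}|^2)=-2\sum|u_k|^2$, the output will be
\[
\ddot F(t)=2h^3\sum k^2\bigl(|\partial^s v_k|^2-|\partial^s u_k|^2\bigr)+2h\sum|u_k|^2-2h^3\sum|\partial^s u_k|^2.
\]
The first sum vanishes by the hypothesis, and summing the pointwise identity $|Mu_k|^2+h^2|\partial^s u_k|^2=\tfrac{1}{2}(|u_{k+1}|^2+|u_{k-1}|^2)$ over $k$ collapses the remaining two terms into $\ddot F(t)=2h\sum|Mu_k|^2$, as stated.

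To see $\dddot F=0$, observe that $(Mu_k,Mv_k)$ itself satisfies the same evolution system (because $M$ commutes with $\partial^s$), and $|Mu_k|^2=|Mv_k|^2$ plays the role of the hypothesis for this new pair; the $\ell^2$-conservation established above then gives that $h\sum|Mu_k|^2$ is time-independent, so $\dddot F=0$. Consequently $F$ is polynomial of degree at most two; after a translation in $t$ to kill the linear term, $F(t)=a+bt^2$, and applying \eqref{eq011} to $u^0$ with the given normalization yields $2\le 2\sqrt{ab}$, i.e., $ab\ge 1$, whence $F(t)\ge a+t^2/a$. If the two parabolas intersect at a nonzero $t$ they must coincide, forcing equality in \eqref{eq011} and characterising $u^0$ as the minimizer. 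The main obstacle will be the careful bookkeeping of real and imaginary parts in the double summation by parts leading to the displayed formula for $\ddot F$, and in particular securing the exact cancellation of the $k^2$-weighted term through the symmetry hypotheses relating $u$ and $v$.
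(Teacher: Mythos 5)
Your proposal is correct: I checked the symmetric Leibniz rule $\partial^s(\phi u)_k=\partial^s\phi_k\,Mu_k+M\phi_k\,\partial^s u_k$, the telescoping identity $\sum_k k(|u_{k+1}|^2-|u_{k-1}|^2)=-2\sum_k|u_k|^2$, and your intermediate formula for $\ddot F$, which does collapse to $2h\sum_k|Mu_k|^2$ via $\sum_k|Mu_k|^2+h^2\sum_k|\partial^su_k|^2=\sum_k|u_k|^2$ once the $k^2$-weighted term is cancelled. The overall skeleton (norm conservation, $\ddot F$ constant, hence a parabola controlled by \eqref{eq011} and the normalization) coincides with the paper's, but the two middle steps are organized differently. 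For $\ddot F$, the paper differentiates twice in raw index form and shifts indices so that the weights $(k-1)^2-2k^2+(k+1)^2=2$ produce the cancellation directly from the two standing hypotheses; you first convert those hypotheses into the pointwise identities $|\partial^su_k|^2=|\partial^sv_k|^2$ and $|Mu_k|^2=|Mv_k|^2$ and then run a summation by parts in the style of Theorem \ref{teo31}, so the term to be cancelled appears explicitly as $2h^3\sum_k k^2\bigl(|\partial^sv_k|^2-|\partial^su_k|^2\bigr)$. Your version is longer but makes transparent exactly where each hypothesis enters; the paper's is more compact but the cancellation is buried in the index arithmetic. For $\dddot F=0$, the paper verifies separately that $\sum_k|u_k|^2$ and $\Re\sum_k u_{k+1}\overline{u_{k-1}}$ are conserved and notes that $\ddot F$ is a combination of these; your observation that $(Mu,Mv)$ solves the same system, inherits $|Mu_k|^2=|Mv_k|^2$, and hence has $h\sum_k|Mu_k|^2$ conserved by the already-established argument is a clean structural alternative that reuses the invariance lemma instead of redoing a computation. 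One shared loose end: like the paper, you treat the differentiations and index shifts formally; the paper at least gestures at justifying them via the Fourier-side estimate \eqref{eq017}, and a complete write-up should include that remark.
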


\begin{proof} To begin with, we will prove the $\ell^2(\mathbb{Z})$-invariance. If we differentiate the equality given by the hypothesis $\|u(t)\|_2=\|v(t)\|_2$, then we have
\[
\Re\sum_{k=-\infty}^\infty i(v_{k+1}-v_{k-1})\overline{u_k}=-\Re\sum_{k=-\infty}^\infty i(u_{k+1}-u_{k-1})\overline{v_k}.
\]

Then adding this sums, dividing by 2 and using that $\Im(z)=-\Im(\overline{z})$, we have
\[
\partial_t\|u(t)\|_2=\frac{1}{2}\Im\sum_{k=-\infty}^\infty(\overline{v_k}u_{k-1}-\overline{v_k}u_{k+1}+u_{k+1}\overline{v_k}-u_{k-1}\overline{v_k})=0.
\]

Using the same procedure, we observe that
\[\partial_t\Re\sum_{k=-\infty}^\infty u_{k+1}\overline{u_{k-1}}=\partial_t\Re\sum_{k=-\infty}^\infty v_{k+1}\overline{v_{k-1}}=0,\]
 which will be useful later.

Now,
\[
\dot F(t)=\sumhzrec k^2i(v_{k+1}-v_{k-1})\overline{u_k}=-\sumhzimc k^2(v_{k+1}\overline{u_k}-v_{k-1}\overline{u_k}).
\]

Differentiating again we have
\[\begin{aligned}
\ddot F(t)&=-\frac{h}{2}\Re\sum_{k=-\infty}^\infty  k^2(-u_{k+2}\overline{u_k}+2|u_k|^2-|v_{k+1}|^2-|v_{k-1}|^2+2v_{k+1}\overline{v_{k-1}}-u_{k-2}\overline{u_k})\\&=\frac{h}{2}\Re\sum_{k=-\infty}^\infty\Bigl((k-1)^2(u_{k+1}\overline{u_{k-1}}+|v_k|^2)-2k^2(|u_k|^2+v_{k+1}\overline{v_{k-1}})\bigr.\\&\hspace{2.2cm} \bigl.+(k+1)^2(|v_k|^2+u_{k+1}\overline{u_{k-1}}) \Bigr)\\&=h\Re\sum_{k=-\infty}^\infty\left(|u_k|^2+u_{k+1}\overline{u_{k-1}}\right)=2\sumhz\left|\frac{u_{k+1}+u_{k-1}}{2}\right|^2.
\end{aligned}\]

Moreover, using the previous calculations it is now obvious that $\dddot F(t)=0$. Again, this formal calculations are rigorous if we look at the system in the Fourier space, having a similar estimate to \eqref{eq017}. Now, assuming without loss of generality $\dot F(0)=0$, and since by \eqref{eq011}
\[
\sqrt{a b}\ge 1,
\]
we have $F(t)=a+bt^2\ge a+t^2/a$,
and if these two parabolas intersect each other, we have the equality in \eqref{eq011} and then $u^0$ has to be the minimizing sequence.
\end{proof}

Now we are going to solve the system using the Fourier method. The system that we want to solve is
\[
\left\{\begin{array}{ll}\partial_tu_k=i\dfrac{v_{k+1}-v_{k-1}}{2h}&u_k(0)=u_k^0,\\ \partial_tv_k=-i\dfrac{u_{k+1}-u_{k-1}}{2h}&v_k(0)=v_k^0.\end{array}\right.
\]

We consider $f_0(x),g_0(x)$ $2\pi/h-$periodic functions such that 
\[
u_k^0=\hat{f_0}(k),\ v_k^0=\hat{g_0}(k)\text{ and }u_k(t)=(f(t))\hat\,(k),\ v_k(t)=(g(t))\hat\, (k),
\]
so the system in the Fourier space is
\[
\left\{\begin{array}{ll}\partial_tf=\dfrac{\sin(xh)}{h}g&f(x,0)=f_0(x),\\ \partial_tg=-\dfrac{\sin(xh)}{h}f&g(x,0)=g_0(x).\end{array}\right.
\]

We have a system of two ODEs, whose solution is
\[\begin{aligned}
f(x,t)&=\frac{f_0(x)-ig_0(x)}{2}e^{i\sin(xh)t/h}+\frac{f_0(x)+ig_0(x)}{2}e^{-i\sin(xh)t/h},\\
g(x,t)&=\frac{g_0(x)+if_0(x)}{2}e^{i\sin(xh)t/h}+\frac{g_0(x)-if_0(x)}{2}e^{-i\sin(xh)t/h}.
\end{aligned}\]

Finally, we recover from these expressions the value of $u_k(t)$ and $v_k(t)$.
\[
u_k(t)=\int_{-\pi/h}^{\pi/h}f(x,t)e^{-ixkh}\,dx,\ \ v_k(t)=\int_{-\pi/h}^{\pi/h}g(x,t)e^{-ixkh}\,dx.
\]

We can prove the $\ell^2(\mathbb{Z})-$invariance in the Fourier space too, proving that the functions $f$ and $g$ are $L^2[-\pi/h,\pi/h]-$invariant. We only have to use that $\|f(t)\|_2=\|g(t)\|_2\ \forall t$, which is true thanks to the hypothesis on $u_k$ and $v_k$.

The two equations we want to mention here are the cases when we set, on the one hand $v_k=\overline{u_k}$ and, on the other hand, $v_k=(-1)^{k+1}u_k$. It is easy to check that these two options satisfy the hypothesis of the Theorem \ref{teo32}.

\textit{\underline{First case: $v_k=\overline{u_k}$.}}

In this case we can state the Virial principle as follows:

\begin{cor}
Assume $u=(u_k(t))_k$ satisfies 
\[
\partial_tu_k=i\frac{\overline{u_{k+1}}-\overline{u_{k-1}}}{2h},\ \ \ u_k(0)=u_k^0,
\]
 where $u^0=(u_k^0)_k\in H(\mathbb{Z})$ and
 \[h^2\sum_{k=-\infty}^\infty \frac{u_{k+1}^0-u_{k-1}^0}{2}\overline{u_k^0}=2.\]
 
 We define
 \[
 F(t)=\sumhz k^2h^2|u_k(t)|^2,\ \ a=\sumhz k^2h^2|u_k^0|^2<+\infty,\ \ b=\sumhz\left|\frac{u_{k+1}^0+u_{k-1}^0}{2}\right|^2<+\infty.
 \]
 
Then,
\[
\ddot F(t)=2\sumhz\left|\frac{u_{k+1}^0+u_{k-1}^0}{2}\right|^2,
\]
and the equation is $\ell^2(\mathbb{Z})$-invariant. Moreover,
\[
F(t)=a+bt^2\ge a+\frac{t^2}{a},
\]
and, if these two parabolas intersect each other, they are the same parabola and the initial datum $u^0$ is the minimizer of \eqref{eq011}.
\end{cor}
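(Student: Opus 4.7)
The plan is to deduce this corollary as a direct specialization of Theorem \ref{teo32} by choosing the ansatz $v_k(t)=\overline{u_k(t)}$, verifying that this choice is compatible with the system and that the two extra hypotheses of Theorem \ref{teo32} hold automatically.

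First I would check consistency of the coupled system under this substitution. Taking complex conjugates in the equation $\partial_t u_k = i\,\frac{\overline{u_{k+1}}-\overline{u_{k-1}}}{2h}$ yields
\[
\partial_t\overline{u_k}=-i\,\frac{u_{k+1}-u_{k-1}}{2h},
\]
which is precisely the second equation of the system in Theorem \ref{teo32} when $v_k=\overline{u_k}$. Hence a solution of the single scalar equation of the corollary, together with the choice $v_k^0=\overline{u_k^0}$, automatically produces a solution of the coupled pair. Moreover, the normalization in \eqref{eq011} for $u^0$ coincides with the normalization required in Theorem \ref{teo32}.

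Second, I would verify the two structural hypotheses of Theorem \ref{teo32}. The identity $|u_k(t)|^2=|v_k(t)|^2$ is immediate from $|\overline{u_k}|=|u_k|$. For the second condition, I would compute
\[
v_{k+1}(t)\,\overline{v_{k-1}(t)}=\overline{u_{k+1}(t)}\,u_{k-1}(t)=\overline{u_{k+1}(t)\,\overline{u_{k-1}(t)}},
\]
whose real part equals $\Re\bigl(u_{k+1}(t)\overline{u_{k-1}(t)}\bigr)$, giving the desired equality. With both hypotheses confirmed, Theorem \ref{teo32} applies directly and yields $\ddot F(t)=2\sumhz\left|\frac{u_{k+1}(t)+u_{k-1}(t)}{2}\right|^2$ together with $\dddot F(t)=0$; since $\ddot F$ is then constant in $t$, its value equals its value at $t=0$, which is exactly the expression $2b=2\sumhz\left|\frac{u_{k+1}^0+u_{k-1}^0}{2}\right|^2$ appearing in the statement. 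The convex-parabola conclusion $F(t)=a+bt^2\ge a+t^2/a$ and the characterization of the equality case as the minimizer of \eqref{eq011} are then inherited verbatim from Theorem \ref{teo32}.

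I do not expect any real obstacle here: the corollary is essentially a compatibility check. The only subtlety worth stating explicitly is that one must justify that the scalar equation of the corollary admits a solution in $H(\mathbb{Z})$ for the prescribed initial datum (so that the formal calculations make sense), but this can be handled exactly as in the proof of Theorem \ref{teo32}, by passing to the Fourier side where the equation becomes a first-order ODE in $t$ with bounded coefficients in $x$, preserving the relevant $L^2$ and $\dot H^1$ norms on $[-\pi/h,\pi/h]$ and therefore the quantities $a$ and $b$.
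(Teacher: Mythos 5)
Your proposal is correct and follows exactly the route the paper takes: the paper introduces this corollary as the specialization $v_k=\overline{u_k}$ of Theorem \ref{teo32} and notes that the two structural hypotheses are easily checked, which is precisely the compatibility verification you carry out. Nothing further is needed.
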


In this case, since $g_0(x)=\overline{f_0}(-x)$, the solution to the equation is
\[
u_k(t)=\int_{-\pi/h}^{\pi/h}f(x,t)e^{-ixkh}\,dx,
\]
where
\[
f(x,t)=\frac{f_0(x)-i\overline{f_0}(-x)}{2}e^{i\sin(xh)t/h}+\frac{f_0(x)+i\overline{f_0}(-x)}{2}e^{-i\sin(xh)t/h}.
\]

In this case, the equation is a discrete version of the equation
\[
\partial_t f=i\overline{\partial_x f}.
\]

If we take another time derivative, we get the wave equation $\partial_t^2f=\partial_x^2 f$, so this equation gives an $L^2$-invariant factorization of the one dimensional wave equation. Using \textit{d'Alembert's formula} we can see that the solution to this equation is
\[
f(x,t)=\frac12\Big(f_0(x+t)+f_0(x-t)\Big)+\frac{i}{2}\Big(\overline{f_0}(x+t)-\overline{f_0}(x-t)\Big).
\]

If we take another time derivative in our discrete equation, as it can be expected we get a discrete version of the wave equation
\[
\partial_t^2u_k=\frac{u_{k+2}-2u_k+u_{k-2}}{4h^2}.
\]

In the continuous setting, the analogous of this corollary is:
\begin{prop}
Assume $f$ satisfies
\[\partial_t f=i\overline{\partial_xf},\ \ f(x,0)=f_0(x),\]
and let $F(t)=\int_\mathbb{R}x^2|f(x,t)|^2\,dx,$ then
\[
\ddot{F}(t)=2\int_\mathbb{R}|f(x,t)|^2=\ddot{F}(0).
\]
\end{prop}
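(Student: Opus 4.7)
The plan is to carry out, in the continuous setting, the same double differentiation used in Theorem~\ref{teo32}: differentiate $F(t)$ twice in time, use the equation $\partial_{t}f=i\overline{\partial_{x}f}$ to turn every time derivative into a spatial one, and then integrate by parts in $x$. Under the implicit assumption that $f_{0}$ is smooth and decays fast enough at infinity, the boundary terms at $\pm\infty$ drop and all the formal manipulations are legitimate.

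First I would verify $L^{2}$-invariance, which is the continuous analogue of the invariance step in Theorem~\ref{teo32} and is exactly what will identify $\ddot F(t)=2\int|f|^{2}\,dx$ with $\ddot F(0)$. A direct computation using the equation gives
\[
\partial_{t}|f|^{2}=2\Re(\bar{f}\,\partial_{t}f)=2\Re(i\bar{f}\,\overline{\partial_{x}f})=2\Im(f\,\partial_{x}f)=\Im\partial_{x}(f^{2}),
\]
so integrating over $\mathbb{R}$ shows $\|f(t)\|_{2}=\|f_{0}\|_{2}$ for every $t$.

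For the main identity I differentiate $F(t)=\int x^{2}|f|^{2}\,dx$. The identity above together with one integration by parts yields
\[
\dot F(t)=\int x^{2}\,\Im\partial_{x}(f^{2})\,dx=-2\Im\int x\,f^{2}\,dx.
\]
Differentiating once more and substituting the equation gives
\[
\ddot F(t)=-4\Im\int x\,f\cdot i\overline{\partial_{x}f}\,dx=-4\Re\int x\,f\,\overline{\partial_{x}f}\,dx,
\]
and using $2\Re(f\,\overline{\partial_{x}f})=\partial_{x}|f|^{2}$ followed by a final integration by parts produces $\ddot F(t)=2\int|f|^{2}\,dx$. Combined with the first step this is exactly $\ddot F(t)=\ddot F(0)$.

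The only point requiring care is the rigor of these formal manipulations, i.e.\ the convergence of the integrals and the vanishing of the boundary terms. For this I would invoke the d'Alembert-type representation formula for $f(x,t)$ already recorded in the text, which shows that $f(\cdot,t)$ inherits the regularity and decay of $f_{0}$ at each time; under the natural assumption $f_{0}\in\mathcal{S}(\mathbb{R})$ this makes every integration by parts above licit. I do not expect any substantive obstacle beyond this standard verification.
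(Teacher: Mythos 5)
Your proof is correct and follows exactly the route the paper intends: the paper states this proposition without proof as the continuous analogue of the computation in Theorem~\ref{teo32}, and your two differentiations in time, the use of $\partial_t|f|^2=\partial_x\Im(f^2)$ for the $L^2$-invariance, and the two integrations by parts are precisely that computation transplanted to $\mathbb{R}$. The appeal to the d'Alembert representation to justify decay of $f(\cdot,t)$ for Schwartz data is also consistent with the formula recorded in the text.
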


\textit{\underline{Second case: $v_k=(-1)^{k+1}u_k$.}}

In this case we can state the Virial principle as follows:

\begin{cor}
Assume $u=(u_k(t))_k$ satisfies 
\[
\partial_tu_k=i\frac{(-1)^ku_{k+1}+(-1)^{k-1}u_{k-1}}{2h},\ \ \ u_k(0)=u_k^0,
\]
 where $u^0=(u_k^0)_k\in H(\mathbb{Z})$ and
 \[h^2\sum_{k=-\infty}^\infty \frac{u_{k+1}^0-u_{k-1}^0}{2}\overline{u_k^0}=2.\]
 
 We define
 \[
 F(t)=\sumhz k^2h^2|u_k(t)|^2,\ \ a=\sumhz k^2h^2|u_k^0|^2<+\infty,\ \ b=\sumhz\left|\frac{u_{k+1}^0+u_{k-1}^0}{2}\right|^2<+\infty.
 \]
 
Then,
\[
\ddot F(t)=2\sumhz\left|\frac{u_{k+1}^0+u_{k-1}^0}{2}\right|^2,
\]
and the equation is $\ell^2(\mathbb{Z})$-invariant. Moreover,
\[
F(t)=a+bt^2\ge a+\frac{t^2}{a},
\]
and, if these two parabolas intersect each other, they are the same parabola and the initial datum $u^0$ is the minimizer of \eqref{eq011}.
\end{cor}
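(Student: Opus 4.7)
The plan is to deduce this corollary from Theorem \ref{teo32} by choosing the auxiliary sequence $v_k=(-1)^{k+1}u_k$ and verifying that the pair $(u,v)$ satisfies all the hypotheses of that theorem. This way, no fresh Virial computation is required, and all conclusions (the formula for $\ddot F$, vanishing of $\dddot F$, $\ell^2(\mathbb{Z})$-invariance, and the parabolic bound leading to the identification of the minimizer of \eqref{eq011}) are imported for free.

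First I would check that the ansatz $v_k=(-1)^{k+1}u_k$ is consistent with the coupled system in Theorem \ref{teo32}. Using $(-1)^{k+2}=(-1)^k$, a direct computation gives
\[
v_{k+1}-v_{k-1}=(-1)^{k}u_{k+1}-(-1)^{k}u_{k-1}=(-1)^k(u_{k+1}-u_{k-1}),
\]
so the equation $\partial_t u_k=i(v_{k+1}-v_{k-1})/(2h)$ coincides exactly with the equation $\partial_t u_k=i\bigl((-1)^{k}u_{k+1}+(-1)^{k-1}u_{k-1}\bigr)/(2h)$ in the statement. Differentiating the ansatz and substituting then yields $\partial_t v_k=(-1)^{k+1}\partial_t u_k=-i(u_{k+1}-u_{k-1})/(2h)$, which is precisely the second equation of the system. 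Hence the pair $(u,v)$ with $v_k=(-1)^{k+1}u_k$ does solve the system appearing in Theorem \ref{teo32}, with initial datum $v_k^0=(-1)^{k+1}u_k^0\in H(\mathbb{Z})$.

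Next I would verify the two pointwise hypotheses of that theorem. The modulus condition $|v_k(t)|^2=|u_k(t)|^2$ is immediate from $|(-1)^{k+1}|=1$. For the cross condition,
\[
v_{k+1}(t)\,\overline{v_{k-1}(t)}=(-1)^{k+2}(-1)^{k}\,u_{k+1}(t)\,\overline{u_{k-1}(t)}=u_{k+1}(t)\,\overline{u_{k-1}(t)},
\]
so the real parts coincide trivially. The normalization hypothesis $h^2\sum_k \frac{u_{k+1}^0-u_{k-1}^0}{2}\overline{u_k^0}=2$ is inherited from the assumption of the corollary, and $F(t)$, $a$, $b$ are defined identically to those in Theorem \ref{teo32}.

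With every hypothesis of Theorem \ref{teo32} now verified, applying it yields directly $\ddot F(t)=2h\sum_k|(u_{k+1}+u_{k-1})/2|^2$, $\dddot F(t)=0$, $\ell^2(\mathbb{Z})$-invariance of the equation, and the inequality $F(t)=a+bt^2\geq a+t^2/a$ (after assuming without loss of generality $\dot F(0)=0$), with equality characterizing $u^0$ as the minimizer of \eqref{eq011}. Since $\ddot F$ is constant in $t$ by the vanishing of $\dddot F$, evaluating at $t=0$ recovers the form written in the statement in terms of $u_k^0$. There is essentially no obstacle here beyond bookkeeping with the sign factor $(-1)^{k+1}$; the only thing to be attentive to is the parity shift that makes both $v_{k+1}-v_{k-1}$ and $(-1)^{k+1}\partial_t u_k$ produce the signs required by the coupled system.
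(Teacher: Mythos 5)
Your proposal is correct and follows exactly the route the paper takes: the paper introduces this corollary precisely as the specialization $v_k=(-1)^{k+1}u_k$ of Theorem \ref{teo32} and leaves the verification of its hypotheses as "easy to check," which is what you have carried out (the sign computations $v_{k+1}-v_{k-1}=(-1)^k(u_{k+1}-u_{k-1})$ and $v_{k+1}\overline{v_{k-1}}=u_{k+1}\overline{u_{k-1}}$ are both right). Nothing further is needed.
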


In this case, since $g_0(x)=-f_0\left(x+\pi/h\right)$, the solution to the equation is
\[
u_k(t)=\int_{-\pi/h}^{\pi/h}f(x,t)e^{-ixkh}\,dx,
\]
where
\[
f(x,t)=\frac{f_0(x)+if_0\left(x+\pi/h\right)}{2}e^{i\sin(xh)t/h}+\frac{f_0(x)-if_0\left(x+\pi/h\right)}{2}e^{-i\sin(xh)t/h}.
\]

\section{Properties of $e^{it\Delta_d}u^0$}

Now we will see that the function (see \eqref{eq015}) $\omega_k^h(t)=e^{it\Delta_d}\omega_k^h$, where $(\omega_k^h)_k$ is the minimizing function to \eqref{eq006}, and the solution to the Schrödinger equation $g(x,t)$ with initial datum $g_0(x)=e^{-\alpha |x|^2/2}$ have got similar properties.

We recall that $g_0(x)$ satisfies the equation $\alpha x g_0(x)+\nabla g_0(x)=0$, which is a sum of a symmetric and a skew-symmetric operator. It is easy to see that then $g(x,t)$ satisfies 
\[
g=(\alpha \mathcal{S}+\mathcal{A})g=0,\text{ where }\mathcal{A}u=\nabla u,\ \ \mathcal{S}u=x u+2it\nabla u.
\]

Hence, the solution $g(x,t)$ satisfies another equation with a symmetric and a skew-symmetric operator. Moreover, if we denote $\Lambda(t)f=xf+2it\nabla f$, we can see that
\[
\Lambda(t)e^{it\Delta}u_0(x)=e^{it\Delta}\Lambda(0)u_0(x)\Longrightarrow \Lambda(t)=e^{it\Delta}\Lambda(0)e^{-it\Delta}=e^{it\Delta}xe^{-it\Delta},
\] 
where $e^{it\Delta}u_0(x)$ stands for the solution to the problem
\[
\left\{\begin{array}{ll}\partial_tu(x,t)=i\Delta u(x,t),&x\in\mathbb{R}^d,\ t\in\mathbb{R},\\u(x,0)=u_0(x).\end{array}\right.
\]

We have the following result:

\begin{teo}\label{teo41}
Let $\omega^h(t)=(\omega_k^h(t))_k$ be given by \eqref{eq015}. Then
\[(\mathcal{A}_h+\alpha\Lambda_d(t))\omega_k^h(t)=0,\]
where $\mathcal{A}_h$ is skew-symmetric and $\Lambda_d(t)$ is symmetric and given by
\[
\mathcal{A}_hu_k(t)=\left(\frac{u_{k+e_j}(t)-u_{k-e_j}(t)}{2h}\right)_j, \ \ \ \Lambda_d(t)u_k(t)=khu_k(t)+2it\mathcal{A}_hu_k(t).
\]

Moreover,
\[
\Lambda_d(t)e^{it\Delta_d}u_k^0=e^{it\Delta_d}\Lambda_d(0)u_k^0\Longrightarrow \Lambda_d(t)=e^{it\Delta_d}\Lambda_d(0)e^{-it\Delta_d}=e^{it\Delta_d}khe^{-it\Delta_d}.
\]
\end{teo}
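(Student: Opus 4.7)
The plan is to discretize the author's own continuous argument that appears immediately before the statement. The key input is the Euler--Lagrange equation for the minimizer, derived in Section~2, namely $(\alpha\mathcal{S}_h+\mathcal{A}_h)\omega_k^h=0$, together with the fact that $\omega_k^h(t)=e^{it\Delta_d}\omega_k^h$. I would conjugate this identity by the free discrete propagator and identify the conjugate $e^{it\Delta_d}\mathcal{S}_he^{-it\Delta_d}$ with the operator $\Lambda_d(t)$ defined in the statement.

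The core algebraic ingredient is the commutator identity $[\Delta_d,\mathcal{S}_h]=2\mathcal{A}_h$, understood component-wise. This is a direct one-line computation: writing out $\Delta_d(k_jhu_k)-k_jh\Delta_d u_k$, the index shifts $k_j\pm 1$ contribute exactly $(u_{k+e_j}-u_{k-e_j})/h=2(\mathcal{A}_hu_k)_j$, while all other terms cancel. On the other hand, $\mathcal{A}_h$ and $\Delta_d$ are both translation-invariant, so via the Fourier identification \eqref{eq207} they become the multipliers $i\sin(x_jh)/h$ and $-4\sum_j\sin^2(x_jh/2)/h^2$ respectively, and therefore they commute. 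Setting $\widetilde\Lambda_d(t):=e^{it\Delta_d}\mathcal{S}_he^{-it\Delta_d}$ and differentiating in time, the Heisenberg equation gives
\[
\partial_t\widetilde\Lambda_d(t)=ie^{it\Delta_d}[\Delta_d,\mathcal{S}_h]e^{-it\Delta_d}=2i\mathcal{A}_h,
\]
with $\widetilde\Lambda_d(0)=\mathcal{S}_h$, which integrates to $\widetilde\Lambda_d(t)=\mathcal{S}_h+2it\mathcal{A}_h=\Lambda_d(t)$. This yields at once the conjugation formula $\Lambda_d(t)=e^{it\Delta_d}\mathcal{S}_he^{-it\Delta_d}=e^{it\Delta_d}kh\,e^{-it\Delta_d}$ and the intertwining relation $\Lambda_d(t)e^{it\Delta_d}=e^{it\Delta_d}\Lambda_d(0)$ claimed in the statement.

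For the main identity, I apply $e^{it\Delta_d}$ to the Euler--Lagrange equation, insert $e^{-it\Delta_d}e^{it\Delta_d}=I$ between the operators and $\omega_k^h$, and use $[\mathcal{A}_h,e^{it\Delta_d}]=0$ to simplify the $\mathcal{A}_h$ term:
\[
(\alpha\Lambda_d(t)+\mathcal{A}_h)\omega_k^h(t)=e^{it\Delta_d}(\alpha\mathcal{S}_h+\mathcal{A}_h)\omega_k^h=0.
\]
Symmetry of $\Lambda_d(t)$ is then immediate, either from the explicit form $\mathcal{S}_h+2it\mathcal{A}_h$ (since $\mathcal{S}_h$ is symmetric and $i$ times the skew-symmetric $\mathcal{A}_h$ is symmetric) or from its description as a unitary conjugate of the symmetric $\mathcal{S}_h$. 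The only point requiring care is that $\mathcal{S}_h$ is unbounded while $\mathcal{A}_h$ and $\Delta_d$ are bounded, so one has to check that $e^{\pm it\Delta_d}$ preserves the weighted space $H(\mathbb{Z}^d)$; this is straightforward via \eqref{eq207}, where the propagator is a smooth phase of bounded derivatives and hence preserves the periodic $H^1$ needed to make all the compositions well-defined. I expect this domain check to be the main (though mild) technical obstacle; everything else reduces to the single commutator computation above.
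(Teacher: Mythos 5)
Your proof is correct, but it is not the route the paper takes. The paper's proof is a direct special-function computation: it applies the three-term recurrence $I_{\nu-1}(z)-I_{\nu+1}(z)=\tfrac{2\nu}{z}I_\nu(z)$ to the explicit formula \eqref{eq015} to show $\mathcal{A}_h\omega_k^h(t)=-\tfrac{\alpha kh}{1+2\alpha it}\omega_k^h(t)$, from which $(\mathcal{A}_h+\alpha\Lambda_d(t))\omega_k^h(t)=0$ follows by inspection; the intertwining relation is then verified by the same recurrence applied term-by-term to the convolution representation $e^{it\Delta_d}u^0_k=e^{-2dit/h^2}\sum_m u^0_m\prod_l I_{k_l-m_l}(2it/h^2)$. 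You instead work in the Heisenberg picture: the commutator identity $[\Delta_d,\mathcal{S}_h]=2\mathcal{A}_h$ (which I checked and is right), the fact that $\mathcal{A}_h$ and $\Delta_d$ commute as Fourier multipliers, and integration of $\partial_t\bigl(e^{it\Delta_d}\mathcal{S}_he^{-it\Delta_d}\bigr)=2i\mathcal{A}_h$ give $\Lambda_d(t)=e^{it\Delta_d}\mathcal{S}_he^{-it\Delta_d}$, after which the main identity is just conjugation of the Euler--Lagrange equation $(\alpha\mathcal{S}_h+\mathcal{A}_h)\omega^h=0$ from Section~2. Your approach buys conceptual clarity --- it explains \emph{why} $\Lambda_d(t)$ must have the stated form rather than merely verifying it, it genuinely discretizes the continuous argument sketched before the theorem, and it needs nothing about Bessel functions beyond the fact that $\omega^h(t)$ is the free evolution of the minimizer. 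The paper's computation, in exchange, is self-contained at the level of the explicit formulas and reconfirms along the way that the evolved minimizer keeps the product-of-Bessel form, reinforcing the analogy with the evolved Gaussian. Your remark on domains (that $e^{\pm it\Delta_d}$ preserves $H(\mathbb{Z}^d)$ because the propagator is a smooth periodic phase with bounded derivatives) is the same justification the paper invokes around \eqref{eq017}, so nothing is missing there.
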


\begin{proof} As the skew-symmetric operator in both equations is the same in the continuous case, $\mathcal{A}u=~\nabla u$, we will compute $\mathcal{A}_h\omega_k^h(t)$ and we will see that we get the symmetric operator from there. Using the recurrence of the function $I_{k_j}(z)$ we have, for $j=1,\dots,d$,
\[\begin{aligned}
\frac{\omega_{k+e_j}^h(t)-\omega_{k-e_j}^h(t)}{2h}&=\frac{e^{-{2dit}/{h^2}}C_{\alpha,h}}{2h}\left(I_{k_j+1}\left(\frac{1+2\alpha it}{\alpha h^2}\right)-I_{k_j-1}\left(\frac{1+2\alpha it}{\alpha h^2}\right)\right)\prod_{l\ne j}I_{k_l}\left(\frac{1+2\alpha it}{\alpha h^2}\right)\\&=-\frac{\alpha k_jh e^{-{2dit}/{h^2}}C_{\alpha,h}}{1+2\alpha it}\prod_{l=1}^dI_{k_l}\left(\frac{1+2\alpha it}{\alpha h^2}\right)=-\frac{\alpha k_jh}{1+2\alpha it}\omega_k^h(t).
\end{aligned}\]

Hence, $\omega_k^h(t)$ satisfies the equation $(\mathcal{A}_h+\alpha\Lambda_d)\omega_k^h(t)=0$.

 Furthermore, using again the recurrence of $I_k(z)$ we have
\[\begin{aligned}
&\Lambda_d(t)e^{it\Delta_d}u_k^0=khe^{-2dit/h^2}\sum_{m\in\mathbb{Z}^d}u_{m}^0\prod_{l=1}^dI_{k_l-m_l}\left(\frac{2it}{h^2}\right)\\&\ \ \ +\left(\frac{it}{h}e^{-2dit/h^2}\sum_{m\in\mathbb{Z}^d}u_{m}^0\left(I_{k_j-m_j+1}\left(\frac{2it}{h^2}\right)-I_{k_j-m_j-1}\left(\frac{2it}{h^2}\right)\right)\prod_{l\ne j}I_{k_l-m_l}\left(\frac{2it}{h^2}\right)\right)_j\\&=\left(e^{-2dit/h^2}\sum_{m\in\mathbb{Z}^d}u_{m}^0\left(k_jh-\frac{it}{h}\frac{2(k_j-m_j)h^2}{2it}\right)\prod_{l=1}^dI_{k_l-m_l}\left(\frac{2it}{h^2}\right)\right)_j\\&=\left(e^{-2dit/h^2}\sum_{m\in\mathbb{Z}^d}m_jhu_m^0\prod_{l=1}^dI_{k_l-m_l}\left(\frac{2it}{h^2}\right)\right)_j=e^{it\Delta_d}\Lambda_d(0)u_0^k.\end{aligned}\]
\end{proof}

\section{Uncertainty principles for finite sequences}

In this section we are going to see some uncertainty relations for finite sequences in one dimension $u=(u_k)_{k=-N}^{k=N}$. The motivation comes from \cite{h}, where the author gives an uncertainty relation for the DFT considering discrete versions of the position and momentum operators, but, using his words, the minimizer does not \textit{``bear much of a connection with the natural of the Gaussian in this context''.} Here, we introduce a slight modification of his operators in order to relate the new minimizer to the Gaussian. The main difference between this approach and the one in \cite{h} is that here we introduce a new parameter which allows us to recover the Gaussian in a limiting process which consists in two steps. First we recover the minimizing function of the periodic uncertainty principle \eqref{eq007}, and then, as we have seen in Section 2 we approach the Gaussian when the period of the minimizing function tends to infinity. Moreover, we give two uncertainty relations truncating the operators we have studied in Section 2 and assuming periodic and Dirichlet conditions.

\subsection{The case of the Discrete Fourier Transform}

The operators we propose here are
\begin{equation}\label{018}
\mathcal{S}_h=\left[\begin{array}{ccc}
q_{-N}&&0\\
&\ddots&\\
0&&q_N
\end{array}\right],\ \ \
\mathcal{A}_h=\frac{1}{2h}\left[\begin{array}{ccccc}
0&1&0&\cdots&-1\\
-1&0&1&\cdots&0\\
0&-1&0&\ddots&0\\
 & &\ddots&\ddots&\\
 1&0&\cdots&-1&0\end{array}\right],
\end{equation}
where
\[
q_k=\frac{(2N+1)h}{2\pi}\sin\left(\frac{2\pi k }{2N+1}\right),\ \ k=-N,\dots, N.
\]

\begin{rem} In  \cite{h}, the author considered the coefficients (in this case for sequences $(u_k)_{k=0}^{k=N}$ and $h=1/2$)
\[
\tilde{q}_k=\sin\left(\frac{2\pi k}{N}\right).
\]

With this choice of $\tilde{q}_k$, the uncertainty principle in \cite{h} has a nice representation for $\|\mathcal{A}_{h}u\|_2$ in terms of the DFT, but, as we have said above, there is no relation between the minimizer  and the Gaussian.\end{rem}

Then if we consider the DFT of a sequence
\[
\hat{u}_k=\frac{1}{\sqrt{2N+1}}\sum_{j=-N}^Nu_je^{-2\pi i k j/(2N+1)},\ \ k=-N,\dots,N,
\]
the uncertainty principle can be written as
\begin{equation}\label{eq119}
2\left(h\sum_{k=-N}^Nq_k^2|u_k|^2\right)^{1/2}\left(h\sum_{k=-N}^N\frac{\sin^2\left(\frac{2\pi k}{2N+1}\right)}{h^2}|\hat{u}_k|^2\right)^{1/2}\ge \left|\langle-[\mathcal{S}_h,\mathcal{A}_h]u,u\rangle\right|,
\end{equation}
or
\begin{equation}\label{eq1192}
2\left(h\sum_{k=-N}^Nq_k^2|u_k|^2\right)^{1/2}\left(h\sum_{k=-N}^N\left(\frac{2\pi}{(2N+1)h^2}\right)^2q_k^2|\hat{u}_k|^2\right)^{1/2}\ge \left|\langle-[\mathcal{S}_h,\mathcal{A}_h]u,u\rangle\right|.
\end{equation}

As we know, the minimizer $\omega^h=(\omega_{k,N}^h)_{k=-N,N}$ satisfies the relation $(\mathcal{S}_h+\alpha\mathcal{A}_h)\omega^h=0$, for $\alpha\ne 0$. Here we will assume that $\alpha=1$ and the initial condition $\omega_{0,N}^h=1$.  Now we want to relate this minimizer to the classical Gaussian, but what we are going to see is that this minimizer converges to the minimizer of the periodic uncertainty principle sated in Section 2.

From \eqref{eq119} and \eqref{018} we know that the minimizing sequence satisfies the system, for $k=-N,\dots,N$
\[
q_k\omega_{k,N}^h+\frac{\omega_{k+1,N}^h-\omega_{k-1,N}^h}{2h}=0,\ \ k=-N,\dots,N
\]
with the conditions $\omega_{0,N}^h=1,\ \omega_{N+1,N}^h=\omega_{-N,N}^h$ and $\omega_{-N-1,N}^h=\omega_{N,N}^h$. Now we define the function $f_j^L(x)$ as
\[
f_j^L(x)=\left\{\begin{array}{cc}\omega_{\sign(x)j,\lceil jL/|x|\rceil}^{|x|/j},&\text{if }0\ne |x|\le L,\\1,&\text{if }0=x< L.\end{array}\right.
\]

The equation that solves the minimizing sequence is a discrete version of the equation, for $x\in[-L,L]$
\[
\frac{L}{\pi}\sin\left(\frac{\pi x}{L}\right)\omega(x)+\omega'(x)=0.
\]

Therefore, we should have that the continuous limit of the sequence should be the  minimizing function of the periodic uncertainty principle shown in Section 2  \eqref{eq007} and \eqref{eq109}, now with the initial condition $\omega(0)=1$, and the role of $h$ played by the quantity $\frac{\pi}{L}$. Hence, as we have shown in Section 2, if we let $L$ tend to $\infty$, then we recover the Gaussian. In Figure \ref{im}, we can see how the minimizing sequence approaches the minimizing function of the periodic uncertainty principle.

\begin{figure}[\h]\centering
\includegraphics{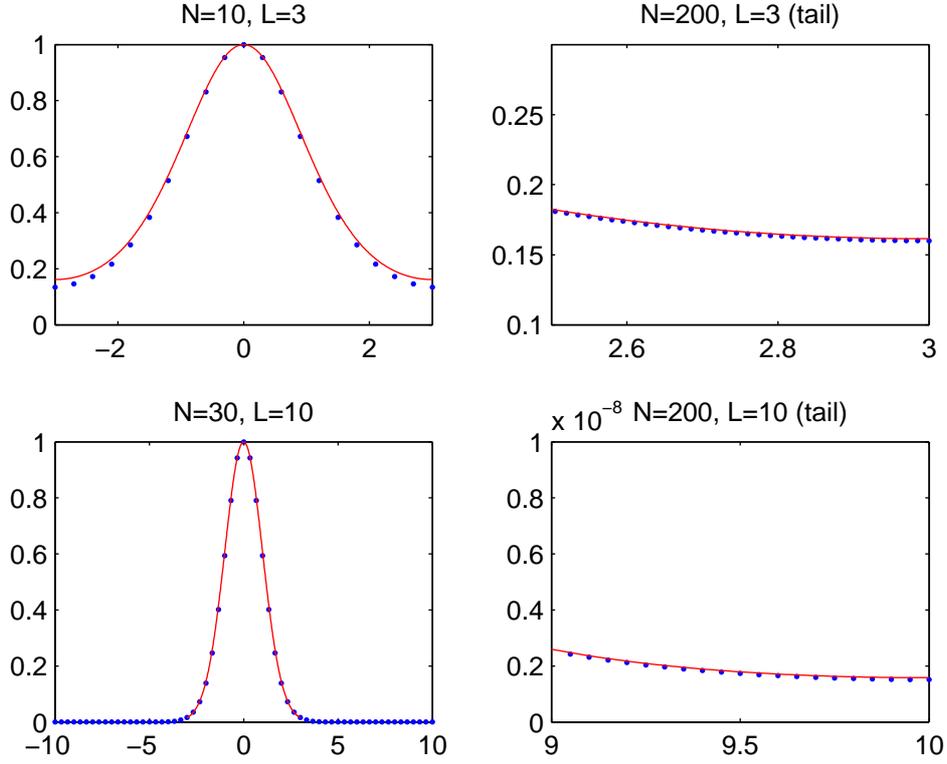}
\caption{Graphic representation of the minimizing sequence and the minimizer of the periodic uncertainty principle in two cases. We see here that when $L$ is large the minimizing sequence approaches the Gaussian. We also see that the convergence in the tails is slower than in the center of the interval.}\label{im}
\end{figure}

In order to see the convergence of the minimizer we slightly change $q_k$ to
\[
q_k=\frac{Nh}{\pi}\sin\left(\frac{\pi k }{N}\right),\ \ k=-N,\dots, N.
\]
and the general case follows directly from this case. The result is the following:
\begin{teo}
Given $x$ and $L>0$ such that $x\in[-L,L]$,
\[
\lim_{j\rightarrow\infty}f_j^L(x)=e^{L^2(\cos(\pi x/L)-1)/\pi^2}.
\]
\end{teo}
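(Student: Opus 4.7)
The key observation is that the recurrence characterizing the minimizer is a central-difference discretization of the first-order linear ODE
\[
\omega'(y) + \frac{L}{\pi}\sin\!\left(\frac{\pi y}{L}\right)\omega(y)=0,\qquad \omega(0)=1,
\]
whose solution, via separation of variables and the identity $\int_0^y\frac{L}{\pi}\sin(\pi s/L)\,ds=\frac{L^2}{\pi^2}(1-\cos(\pi y/L))$, is exactly the claimed limit $\omega(y)=\exp\bigl(\frac{L^2}{\pi^2}(\cos(\pi y/L)-1)\bigr)$. Since $h=|x|/j$ and the evaluation index is $k=\sign(x)j$, we have $kh=x$, so the theorem reduces to showing convergence of this discretization at the node $y=x$.

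The plan is to argue by consistency plus stability. Let $\tilde\omega_k=\omega(kh)$. A Taylor expansion gives $(\tilde\omega_{k+1}-\tilde\omega_{k-1})/(2h)=\omega'(kh)+O(h^2)$, while the coefficient $q_k=\frac{Nh}{\pi}\sin(\pi k/N)$ agrees with $\frac{L}{\pi}\sin(\pi kh/L)$ to within $O(h)$ (since $N=\lceil jL/|x|\rceil$ forces $Nh=L+O(h)$), so the residual upon substituting $\tilde\omega_k$ into the recurrence is $O(h)$ uniformly for $|k|\leq N$. For stability I would pass to the logarithm $\Phi_k=\log\omega_{k,N}^h$ (legitimate on the range where positivity can be bootstrapped) and rewrite the recurrence as $e^{\Phi_{k+1}-\Phi_k}-e^{-(\Phi_k-\Phi_{k-1})}=-2hq_k$. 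Expanding both exponentials to second order and summing from $k=1$ to $k=m$ produces the telescoping identity
\[
(\Phi_{m+1}+\Phi_m)-(\Phi_1+\Phi_0) = -2h\sum_{k=1}^{m}q_k + O(mh^2),
\]
whose right-hand side is a Riemann sum converging, as $h\to 0$, to $-2\int_0^{mh}\frac{L}{\pi}\sin(\pi s/L)\,ds=\frac{2L^2}{\pi^2}(\cos(\pi mh/L)-1)$. The normalization gives $\Phi_0=0$; the oddness $q_{-k}=-q_k$ together with $\omega_{0,N}^h=1$ renders the solution even, and the $k=0$ equation (with $q_0=0$) automatically forces $\omega_{1,N}^h=\omega_{-1,N}^h$. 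The remaining freedom in $\omega_{1,N}^h$ is pinned down by the periodic boundary condition $\omega_{N+1,N}^h=\omega_{-N,N}^h$, which is compatible with the continuous solution (for which $\omega(L)=\omega(-L)$) up to an $O(h)$ mismatch and hence yields $\Phi_1=O(h^2)$. Combining this with $\Phi_{m+1}-\Phi_m=O(h)$, dividing the telescoped identity by $2$, and evaluating at $m=\sign(x)j$ so that $mh=x$ gives $\Phi_{\sign(x)j}\to\frac{L^2}{\pi^2}(\cos(\pi x/L)-1)$, which is the theorem.

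The principal obstacle is controlling the parasitic $(-1)^k$-type mode of the central-difference cocycle, which a priori could contaminate the Riemann-sum estimate above by terms of order $O(1)$ rather than $O(h)$ and which would also invalidate the Taylor expansion of the two exponentials. The structural fact that rescues the argument is that the oddness of $q_k$ confines the minimizer to the even invariant subspace of the transfer-matrix product, where the remaining degree of freedom is fixed by the periodic constraint. Within that subspace, a WKB-type asymptotic (or a transfer-matrix diagonalization) shows that the physical eigenbranch dominates, justifying $\Phi_{k+1}-\Phi_k=O(h)$ uniformly for $|kh|\leq L$, which is exactly the input needed to close the stability step. Making this quantitative for every fixed $x\in[-L,L]$ is where most of the rigorous work would go.
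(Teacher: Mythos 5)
Your proposal takes a genuinely different route from the paper, but it contains a gap that you yourself flag and do not close, and that gap is the entire difficulty of the theorem. The recurrence $\omega_{k+1,N}^h=\omega_{k-1,N}^h-2hq_k\omega_{k,N}^h$ is the leapfrog (midpoint) discretization of a dissipative first-order ODE, and for $q_k>0$ its frozen-coefficient characteristic roots are $\lambda_\pm=-hq_k\pm\sqrt{1+h^2q_k^2}$: the physical root is $\approx 1-hq_k$ but the parasitic root has modulus $\approx 1+hq_k>1$, so over $N\sim L/h$ steps the parasitic mode is amplified by a factor comparable to $e^{\sum_k hq_k}$, i.e.\ exactly the reciprocal of the physical decay. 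Consequently an $O(h)$ contamination at any step can become $O(1)$ at $k=j$, and your uniform estimate $\Phi_{k+1}-\Phi_k=O(h)$ --- which you need both to Taylor-expand the two exponentials and to control the telescoped Riemann sum --- is logically equivalent to the statement that the boundary-value solution lies on the physical branch. As written, the stability step assumes what it must prove. (Parity does not rescue you: the even invariant subspace is one condition, but the selection of the decaying branch is a statement about which solution of the three-term recurrence the periodic boundary condition picks out, which is essentially Pincherle's theorem on minimal solutions; your appeal to ``a WKB-type asymptotic shows that the physical eigenbranch dominates'' is precisely the unproven content. The same issue undermines the claim $\Phi_1=O(h^2)$, since $\omega_{1,N}^h$ is determined only by the global boundary condition.)

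The paper avoids this stability question entirely by solving the boundary-value problem in closed form: it writes $\omega_{k,N}^h$ as an explicit product of continued fractions, converts each convergent via the standard $p_r/q_r$ recurrence (Theorem 149 of Hardy--Wright) into a transfer-matrix product, expands $f_j^L(x)=s_j/t_j$ as sums $\sum_u a_u$ and $\sum_u b_u$ of $u$-fold iterated sums of products of the $q$'s, shows each $a_u$, $b_u$ converges to an iterated integral equal to $\bigl(\frac{2L^2}{\pi}\int\sin(2\pi z)\,dz\bigr)^u/u!$, and justifies interchanging limit and sum by a Weierstrass/Stirling domination of the binomial coefficients. The quotient of the two resulting exponentials gives the limit. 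If you want to pursue your consistency-plus-stability route, the missing ingredient is a quantitative minimal-solution argument (e.g.\ via Pincherle's theorem or a rigorous diagonalization of the nonautonomous transfer-matrix cocycle with uniform control of the off-branch projection); until that is supplied, the proof is incomplete.
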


\textit{Proof.} To begin with, we point out that, since $q_0=0$, we have, by induction $\omega_{k,N}^h=\omega_{-k,N}^h$, so from now on we will only have in mind $x$ positive. On the other hand,  if $x=0$, then we do not have nothing to prove. Moreover, this symmetry in the minimizing sequence allows us to construct the solution to the system by an iterative process starting from $\omega_{N,N}^h$ to $\omega_{1,N}^h$. We have then that
\[\omega_{k,N}^h=\frac{1}{[2hq_k,\dots,2hq_{N-1},1+2hq_N]}\frac{1}{[2hq_{k-1},\dots,1+2hq_N]}\cdots\frac{1}{[2hq_1,\dots,1+2hq_N]},\]
where
\[[a_0,a_1,\cdots,a_n]=a_0+\frac{1}{a_1+\frac{1}{\ddots+\frac{1}{a_n}}}.\]

To deal with this product of continued fractions, we use Theorem 149 in \cite{i},  which states that the continued fraction $[a_0,a_1,\dots,a_r]$ is a rational number $\frac{p_r}{q_r}$, where $p_r$ and $q_r$ are given by the recurrence
\[\begin{aligned}
&p_0=a_0,\ p_1=a_1a_0+1,\ p_n=a_np_{n-1}+p_{n-2}\ (2\le n\le r),\\
&q_0=1,\ q_1=a_1,\ q_n=a_nq_{n-1}+q_{n-2}\ (2\le n\le r).
\end{aligned}\]

Hence, $f_j^L(x)=\omega_{j,\lceil jL/x\rceil}^{x/j}=\frac{s_j}{t_j}$, where
\[\begin{aligned}
s_j&=\left(\begin{array}{cc}1+2\frac{x}{j}q_{\lceil jL/x\rceil}&1\end{array}\right)\prod_{m=\lceil jL/x\rceil-1}^{j+2}\left(\begin{array}{cc}2\frac{x}{j}q_m&1\\1&0\end{array}\right)\left(\begin{array}{c}2\frac{x}{j}q_{j+1}\\1\end{array}\right),\\
t_j&=\left(\begin{array}{cc}1+2\frac{x}{j}q_{\lceil jL/x\rceil}&1\end{array}\right)\prod_{m=\lceil jL/x\rceil-1}^3\left(\begin{array}{cc}2\frac{x}{j}q_m&1\\1&0\end{array}\right)\left(\begin{array}{c}4\frac{x^2}{j^2}q_1q_2+1\\2\frac{x}{j}q_1\end{array}\right).
\end{aligned}\]

\begin{rem}
The notation $\prod_{m=\lceil jL/x\rceil-1}^{j+2}\left(\begin{array}{cc}2\frac{x}{j}q_m&1\\1&0\end{array}\right)$ represents that the first matrix is the one with index $m=\lceil jL/x\rceil-1$, the following matrix is the one with index $m=\lceil jL/x\rceil-2$, and so on.
\end{rem}

 We will assume here that $j,\lceil\frac{jL}{x}\rceil\equiv 0$, (mod 4) and the other cases follow a similar argument. Moreover, we will study separately the behaviour of the numerator and the denominator.

In the case of the numerator, we can write $s_j$ in the following way:
\[
s_j=\sum_{u=0}^{\lceil Lj/x\rceil-j}a_u,\ \text{where} 
\]
\[
a_0=1,\ \ a_u=\left(\frac{2x}{j}\right)^u\sum_{l_1=j/2+1}^{\left\lceil\frac{\lceil L j/x\rceil-u}{2}\right\rceil}\ \sum_{l_2=l_1}^{\left\lceil\frac{\lceil L j/x\rceil-u}{2}\right\rceil}\dots\sum_{l_u=l_{u-1}}^{\left\lceil\frac{\lceil L j/x\rceil-u}{2}\right\rceil}q_{2l_1-1}q_{2l_2}\dots q_{2l_u+u-2},
\]
for $1\le u\le \lceil \frac{L j}{x}\rceil-j$.

When $u$ is fixed, $a_u$ converges to an integral expression when $j$ tends to infinity. To clarify this, we consider the case $u=1$, that is, the sum
\[
\frac{2\pi}{\lceil Lj/x\rceil}\sum_{l=j/2+1}^{\lceil\frac{Lj}{x}\rceil/2}\left(\left\lceil\frac{Lj}{x}\right\rceil\frac{x}{j\pi}\right)^2\sin\left(\frac{\pi(2l-1)}{\lceil Lj/x\rceil}\right).
\]

This sum represents a partition of step $\frac{1}{\lceil Lj/x\rceil}$ of the interval $\left[\frac{j/2+1}{\lceil Lj/x\rceil},\frac12\right]$. Moreover, we have that $\frac{L}{\pi}\le \lceil\frac{Lj}{x}\rceil\frac{x}{j\pi}\le \frac{L}{\pi}+\frac{x}{j\pi}$. This and the fact that when $j$ tends to infinity the interval tends to $\left[\frac{x}{2L},\frac12\right]$ imply that we can bound from below and from above the limit by the same quantity, so we can conclude that
\[
\lim_{j\rightarrow \infty}a_1=\frac{2L^2}{\pi}\int_{x/2L}^{1/2}\sin(2\pi z)\,dz.
\]

For the general case, $a_u$ will converge to an iterated integral by the same reasons. More precisely,
\[\begin{aligned}
\lim_{j\rightarrow\infty}a_u&=\left(\frac{2L^2}{\pi}\right)^u\int_{x/2L}^{1/2}\int_{x_1}^{1/2}\dots\int_{x_{u-1}}^{1/2}\sin(2\pi x_1)\dots\sin(2\pi x_u)\,dx_u\dots dx_1\\&=\left(\frac{2L^2}{\pi}\right)^u\left(\int_{x/2L}^{1/2}\sin(2\pi z)\,dz\right)^u\frac{1}{u!}=\left(\frac{L^2}{\pi^2}(1+\cos(\pi x/L))\right)^u\frac{1}{u!}.
\end{aligned}\]

Now we are going to see that we can interchange the limit with the sum, using Weierstrass criterion. For that, we are going to bound all the sine functions by 1 and get bounds that are independet of $u$. Bounding the sine functions we get
\[\begin{aligned}
\sum_{u=0}^{\lceil Lj/x\rceil-j}a_u&\le\sum_{u=0}^{\lceil Lj/x\rceil-j}\left(\frac{2x}{j}\right)^u\binom{\frac{\lceil Lj/x\rceil-j}{2}+\left[\frac{u}{2}\right]}{u}\\&\le\left(\sum_{u=0}^{\frac{\lceil Lj/x\rceil-j}{2}}+\sum_{j=\frac{\lceil Lj/x\rceil-j}{2}}^{\lceil Lj/x\rceil-j}\right)\binom{\frac{\lceil Lj/x\rceil-j}{2}+\left[\frac{u}{2}\right]}{u}=I+II.
\end{aligned}\]

To begin with, we can make $II$ as small as we want when $j$ is big enough. Indeed, these binomial coefficients form two decreasing sequences, one is generated by the case $u$ even and the other one by the case $u$ odd. Assume that $u=2m$ is even,
\[
\binom{\frac{\lceil Lj/x\rceil-j}{2}+m}{2m}\ge\binom{\frac{\lceil Lj/x\rceil-j}{2}+m+1}{2m+2}\Leftrightarrow5m^2+7m+2\ge \left(\frac{\lceil Lj/x\rceil-j}{2}\right)^2+\frac{\lceil Lj/x\rceil-j}{2},
\] 
which is true because $2m\ge\frac{\lceil Lj/x\rceil-j}{2}$. On the other hand, if $u=2m+1$ is odd, then
\[
\binom{\frac{\lceil Lj/x\rceil-j}{2}+m}{2m+1}\ge\binom{\frac{\lceil Lj/x\rceil-j}{2}+m+1}{2m+3}\Leftrightarrow5m^2+12m+7\ge \left(\frac{\lceil Lj/x\rceil-j}{2}\right)^2,
\]
which is true as well. Moreover, it is quite obvious to check that (recall that\newline $j,\lceil Lj/x\rceil\equiv 0$ (mod 4), so $\frac{\lceil Lj/x\rceil-j}{2}$ is even)
\[
\binom{\frac{\lceil Lj/x\rceil-j}{2}+\frac{\lceil Lj/x\rceil-j}{4}}{\frac{\lceil Lj/x\rceil-j}{2}}\ge\binom{\frac{\lceil Lj/x\rceil-j}{2}+\frac{\lceil Lj/x\rceil-j}{4}}{\frac{\lceil Lj/x\rceil-j}{2}+1}.
\]

Therefore, we have that $a_u\le \left(\frac{2x}{j}\right)^{\frac{\lceil Lj/x\rceil-j}{2}}\binom{3\frac{\lceil Lj/x\rceil-j}{4}}{\frac{\lceil Lj/x\rceil-j}{2}},\ \forall u\ge\frac{\lceil Lj/x\rceil-j}{2}$. Observe that we can improve this estimate since in this way we are decreasing the power of $\frac{x}{j}$ in each $a_u$ to the power of $\frac{x}{j}$ in $a_{\frac{\lceil Lj/x\rceil-j}{2}}$, but this is enough to prove the convergence. The last bound allows us to say that
\[
\sum_{u=\frac{\lceil Lj/x\rceil-j}{2}}^{\lceil Lj/x\rceil-j}a_u\le\left(\frac{2x}{j}\right)^{\frac{\lceil Lj/x\rceil-j}{2}}\binom{3\frac{\lceil Lj/x\rceil-j}{4}}{\frac{\lceil Lj/x\rceil-j}{2}}\frac{\lceil Lj/x\rceil-j}{2}.
\]

The next step consists in proving that this number tends to zero when $j$ tends to infinity. To prove that, we are going to use the Stirling's approximation
\[
\sqrt{2\pi}n^{n+1/2}e^{-n}\le n!\le n^{n+1/2}e^{-n+1},
\]
we have that, after some manipulations
\[\begin{aligned}
\left(\frac{2x}{j}\right)^{\frac{\lceil Lj/x\rceil-j}{2}}&\binom{3\frac{\lceil Lj/x\rceil-j}{4}}{\frac{\lceil Lj/x\rceil-j}{2}}\frac{\lceil Lj/x\rceil-j}{2}\\&\le\left(\frac{3\sqrt{3}x}{j}\right)^{(\lceil Lj/x\rceil-j)/2}\sqrt{\frac{3}{2}}\frac{e}{2\pi}(\lceil Lj/x\rceil-j)^{1/2}\xrightarrow[j\rightarrow\infty]{}0.
\end{aligned}\]

Hence, given $\epsilon>0$, it exists $j_0$ such that $\forall j\ge j_0$,
\[
\sum_{u=\frac{\lceil Lj/x\rceil-j}{2}}^{\lceil Lj/x\rceil-j}a_u\le \epsilon.
\]

Now we have to deal with $I$, that is, the part $0\le u\le \frac{\lceil Lj/x\rceil-j}{2}$. We treat this part of $s_j$ in a similar way, using again Stirling's approximation. We will distinguish the cases $u$ even and $u$ odd, although the estimate is deduced exactly in the same way. For $u$ even we have
\[\begin{aligned}
a_u&\le \left(\frac{2x}{j}\right)^u\binom{\frac{\lceil Lj/x\rceil-j+u}{2}}{u}\\&\le\frac{e^{1-u}}{\sqrt{2\pi}u!}\left(\frac{\lceil Lj/x\rceil-j+u}{\lceil Lj/x\rceil-j-u}\right)^{1/2}\left(\frac{\lceil Lj/x\rceil-j+u}{\lceil Lj/x\rceil-j-u}\right)^{(\lceil Lj/x\rceil-j-u)/2}\left(\frac{x\lceil Lj/x\rceil}{j}-x+\frac{ux}{j}\right)^u.
\end{aligned}\]

Taking logarithms and using that $\log(1+x)\le x$ we see that
\[e^{-u}\left(\frac{\lceil Lj/x\rceil-j+u}{\lceil Lj/x\rceil-j-u}\right)^{(\lceil Lj/x\rceil-j-u)/2}\le~1.
\]

Moreover, $0\le u\le \frac{\lceil Lj/x\rceil-j}{2}\Rightarrow \lceil Lj/x\rceil-j-u\ge \frac{\lceil Lj/x\rceil-j}{2},$ and
\[
\frac{x\lceil Lj/x\rceil}{j}-x+\frac{ux}{j}\le \frac{3L}{2},
\]
since $\lceil z\rceil\le z+1$. Therefore, we have the following bound, independent of $j$, for $a_u$,
\[
a_u\le\sqrt{\frac{3}{2\pi}}\left(\frac{3L}{2}\right)^u\frac{e}{u!},\ \text{for $u$ even}.
\]

Now we consider the case $u$ odd. Using the same formula, 
\[\begin{aligned}
a_u&\le\left(\frac{2x}{j}\right)^u\binom{\frac{\lceil Lj/x\rceil-j+u-1}{2}}{u}\\&\le \frac{e^{1-u}}{\sqrt{2\pi}u!}\left(\frac{\lceil Lj/x\rceil-j+u-1}{\lceil Lj/x\rceil-j-u-1}\right)^{(\lceil Lj/x\rceil-j-u)/2}\left(\frac{x\lceil Lj/x\rceil}{j}-x+\frac{x(u-1)}{j}\right)^u.
\end{aligned}\]

Again, $e^{-u}\left(\frac{\lceil Lj/x\rceil-j+u-1}{\lceil Lj/x\rceil-j-u-1}\right)^{(\lceil Lj/x\rceil-j-u-1)/2}\le1$, while now, the fact that $\frac{\lceil Lj/x\rceil-j}{2}$ is even and $u$ is odd tells us that
\[\begin{aligned}
&u\le \frac{\lceil Lj/x\rceil-j}{2}-1\Rightarrow \lceil Lj/x\rceil-j-u-1\ge \frac{\lceil Lj/x\rceil-j}{2},
\\&u\le\frac{\lceil Lj/x\rceil-j}{2}+1\Rightarrow u-1\le \frac{\lceil Lj/x\rceil-j}{2},
\\&\frac{x\lceil Lj/x\rceil}{j}-x+\frac{x(u-1)}{j}\le\frac{3L}{2},
\end{aligned}\]
so, therefore
\[
a_u\le\sqrt{\frac{3}{2\pi}}\left(\frac{3L}{2}\right)^u\frac{e}{u!},\ \text{for $u$ even},
\]
and it is clear that
\[
I\le \sum_{u=0}^\infty \sqrt{\frac{3}{2\pi}}\left(\frac{3L}{2}\right)^u\frac{e}{u!}<+\infty.
\]

Hence, by Weierstrass criterion,
\[
\lim_{j\rightarrow\infty}\sum_{u=0}^{\frac{\lceil Lj/x\rceil-j}{2}}a_u=\sum_{u=0}^\infty\lim_{u\rightarrow\infty}a_u=\sum_{u=0}^\infty\left(\frac{L^2}{\pi^2}(1+\cos(\pi x/L))\right)^u\frac{1}{u!}=e^{L^2(1+\cos(\pi x/L))/\pi^2}.
\]

If $j$ or $\lceil Lj/x\rceil$ are not of the form $4n$ with $n$ integer, the proof is the same, we only have to take care of the summation limits in the expression of $s_j$, but once we know this expression, we can follow this argument.

Now we have to apply this procedure to the denominator $t_j$. Assuming again that $\lceil jL/x\rceil\equiv0$ (mod 4), we have that
\[
t_j=\sum_{u=0}^{\lceil Lj/x\rceil}b_u,\ \text{where} 
\]
\[
b_0=1,\ \ b_u=\left(\frac{2x}{j}\right)^u\sum_{l_1=1}^{\frac{\lceil L j/x\rceil}{2}-\left[\frac{u-1}{2}\right]}\ \sum_{l_2=l_1}^{\frac{\lceil L j/x\rceil}{2}-\left[\frac{u-1}{2}\right]}\dots\sum_{l_u=l_{u-1}}^{\frac{\lceil L j/x\rceil}{2}-\left[\frac{u-1}{2}\right]}q_{2l_1-1}q_{2l_2}\dots q_{2l_u+u-2},
\]
for $1\le u\le \lceil Lj/x\rceil$. We can use the same argument we have used above to show that

\[
b_u\xrightarrow[\substack{j\rightarrow \infty}]{}\left(\frac{2L^2}{\pi}\right)^u\int_{0}^{1/2}\int_{x_1}^{1/2}\dots\int_{x_{u-1}}^{1/2}\sin(2\pi x_1)\sin(2\pi x_2)\dots\sin(2\pi x_u)dx_u\dots dx_2dx_1,
\]
and, again,
\[\begin{aligned}\left(\frac{2L^2}{\pi}\right)^u&\int_{0}^{1/2}\int_{x_1}^{1/2}\dots\int_{x_{u-1}}^{1/2}\sin(2\pi x_1)\sin(2\pi x_2)\dots\sin(2\pi x_u)dx_u\dots dx_2dx_1,\\&=\left(\frac{2L^2}{\pi}\int_{0}^{1/2}\sin(2\pi z)dz\right)^u\frac{1}{u!}=\left(\frac{2L^2}{\pi^2}\right)^u\frac{1}{u!},\end{aligned}\]
and this implies that, using again Weierstrass criterion,
\[
\lim_{j\rightarrow\infty}t_j=e^{2L^2/\pi^2}.
\]

Finally, we have
\[
\lim_{j\rightarrow \infty}f_j(x)=\frac{e^{L^2(1+\cos(\pi x/L))/\pi^2}}{e^{2L^2/\pi^2}}=e^{L^2(\cos(\pi x/L)-1)/\pi^2},
\]
exactly the minimizer of the periodic uncertainty principle setting there $h=\frac{\pi}{L}$ and the initial condition $\omega(0)=1$. \qed

\subsection{Periodic case}

In this case we will consider the following symmetric and skew-symmetric operators, represented by the matrices 
\begin{equation}\label{118}
\mathcal{S}_{per}=\left[\begin{array}{ccc}
-Nh&&0\\
&\ddots&\\
0&&Nh
\end{array}\right],\ \ \
\mathcal{A}_{per}=\frac{1}{2h}\left[\begin{array}{ccccc}
0&1&0&\cdots&-1\\
-1&0&1&\cdots&0\\
0&-1&0&\ddots&0\\
 & &\ddots&\ddots&\\
 1&0&\cdots&-1&0\end{array}\right].
\end{equation}

Since the operators, acting over sequences $(u_k)_{k=-N}^N$, are represented by a symmetric and a skew-symmetric matrix respectively, the operators are symmetric and skew-symmetric respectively. 

The commutator $[\mathcal{S}_{per},\mathcal{A}_{per}]$ is represented by the matrix $\mathcal{S}_{per}\mathcal{A}_{per}-\mathcal{A}_{per}\mathcal{S}_{per}$, so we have
\[
[\mathcal{S}_{per},\mathcal{A}_{per}]u_k=\left\{\begin{array}{lc}Nu_N- u_{-N+1}/2,&k=-N,\\-u_{k+1}/2-u_{k-1}/2,&k=-N+1,\cdots,N-1,\\Nu_{-N}-u_{N-1}/2,&k=N,\end{array}\right.
\]
and, after some calculations we have
\[\begin{aligned}
\langle-[\mathcal{S}_{per},\mathcal{A}_{per}]u,u\rangle&=h\Re\sum_{k=-N}^{N-1}u_k\overline{u_{k+1}}-2Nh\Re (u_N\overline u_{-N})\\&=h\sum_{k=-N}^N|u_k|^2-\frac{h^2}{2}h\sum_{k=-N}^{N-1}\left|\frac{u_{k+1}-u_{k}}{h}\right|^2\\&\ \ \ -\frac{h}{2}\left(|u_{-N}|^2+|u_N|^2+4N\Re(u_N\overline{u_{-N}})\right).
\end{aligned}\]

Now we look for the minimizing sequence $\omega=(\omega_{k,N}^h)_{k=-N}^N$ that satisfies the identity in the last equality. For this sequence, $(\mathcal{A}_{per}+\alpha\mathcal{S}_{per})\omega=0$, that is
\begin{equation}\label{eq019}\left\{\begin{array}{lc}(\omega_{-N+1,N}^h-\omega_{N,N}^h)/2h-\alpha Nh\omega_{-N,N}^h=0,\\ (\omega_{k+1,N}^h-\omega_{k-1,N}^h)/2h+\alpha kh\omega_{k,N}^h=0,&k=-N+1,\dots,N-1,\\ (\omega_{-N,N}^h-\omega_{N-1,N}^h)/2h+\alpha Nh\omega_{N,N}^h=0.\end{array}\right.\end{equation}

We can solve this system and write $\omega_{k,N}^h$ in terms of $\omega_{0,N}^h$ using continued fractions. Then, studying the limit of $\omega_{k,N}^h$ when $N$ tends to infinity, we can see that, if we solve the system with the initial condition $\omega_{0,N}^h=\omega_0$,
\[
\omega_{k,N}^h\xrightarrow[N\rightarrow \infty]{}\frac{I_k(1/\alpha h^2)}{I_0(1/\alpha h^2)}\omega_0,
\]
which was the minimizing sequence of our first uncertainty principle. We do not give the details of this here because it is a bit easier to do that in the next case. Then we have the following result:

\begin{teo}\label{teo51} For all $u=(u_k)_{k=-N}^N$
\[\begin{aligned}
&|\langle-[\mathcal{S}_{per},\mathcal{A}_{per}]u,u\rangle|\\&\ \le 2\left(h\sum_{k=-N}^{N}|khu_k|^2\right)^{1/2}\left(h\sum_{k=-N+1}^{N-1}\left|\frac{u_{k+1}-u_{k-1}}{2h}\right|^2+\left|\frac{u_{-N+1}-u_{N}}{2h}\right|^2+\left|\frac{u_{-N}-u_{N-1}}{2h}\right|^2\right)^{1/2},
\end{aligned}\]
and the equality is attained for the sequence $(\omega_{k,N}^h)$ satisfying \eqref{eq019}. Moreover, when we let $N$ tend to $\infty$, this sequence tends to the minimizer of \eqref{eq006}.
\end{teo}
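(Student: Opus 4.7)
The plan is to deduce both the inequality and the equality case from the abstract bound \eqref{eq004}, then to analyze the resulting recurrence via continued fractions to read off the asymptotic behavior of the minimizer. First I would verify that in the space $\mathbb{C}^{2N+1}$ with the inner product $\langle u,v\rangle = h\sum_{k=-N}^N u_k\overline{v_k}$ the matrix $\mathcal{S}_{per}$ from \eqref{118} is symmetric (it is real diagonal) and $\mathcal{A}_{per}$ is skew-symmetric (its matrix is real and satisfies $\mathcal{A}_{per}^\top = -\mathcal{A}_{per}$, including the two cyclic off-diagonal entries). Applying \eqref{eq004} immediately yields
\[
|\langle -[\mathcal{S}_{per},\mathcal{A}_{per}]u,u\rangle| \le 2\|\mathcal{S}_{per}u\|\,\|\mathcal{A}_{per}u\|.
\]
The first factor is plainly $h\sum_{k=-N}^N |khu_k|^2$, while expanding $\|\mathcal{A}_{per}u\|^2$ row by row, and separating the two wrap-around rows at $k = \pm N$ from the interior rows $k = -N+1,\dots,N-1$, produces precisely the expression displayed in the statement.

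For the equality case I would use the saturation criterion recalled after \eqref{eq004}: equality holds if and only if there exists $0 \ne \alpha \in \mathbb{R}$ with $\alpha\mathcal{S}_{per}\omega + \mathcal{A}_{per}\omega = 0$. Writing this identity componentwise with the matrix entries from \eqref{118} yields, line by line, the three relations of the system \eqref{eq019}. A nontrivial solution with $\omega_{0,N}^h = 1$ is then produced by solving the interior three-term recurrence outward from $k=0$; the two cyclic rows of \eqref{eq019} become scalar compatibility conditions that, for each $N$, fix the admissible normalization.

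The third and main part is the limit $N \to \infty$. The plan is to mimic the continued fraction argument used earlier in this section for the DFT case: with $\omega_{0,N}^h = 1$ fixed, the interior recurrence
\[
\omega_{k+1,N}^h = -2\alpha k h^2\,\omega_{k,N}^h + \omega_{k-1,N}^h
\]
expresses each ratio $\omega_{k,N}^h / \omega_{k-1,N}^h$ as a finite continued fraction truncated at the boundary. Invoking the classical Gauss continued fraction for ratios of modified Bessel functions of the first kind,
\[
\frac{I_k(1/\alpha h^2)}{I_{k-1}(1/\alpha h^2)} = \cfrac{1}{2\alpha k h^2 + \cfrac{1}{2\alpha(k+1)h^2 + \cfrac{1}{\ddots}}},
\]
and telescoping, one obtains the pointwise limit $\omega_{k,N}^h \to I_k(1/\alpha h^2)/I_0(1/\alpha h^2)$, which is exactly the minimizer of \eqref{eq006} under the normalization $\omega_0^h = 1$.

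The step I expect to be the main obstacle is controlling the periodic compatibility conditions at $k = \pm N$, which couple the two ends of the chain and have no analogue in the infinite setting. To neutralize them, I would rely on the super-exponential decay of $I_N(1/\alpha h^2)$ in $N$, so that both $\omega_{\pm N,N}^h$ are vanishingly small compared with the interior values and the cyclic closure introduces only an $o(1)$ perturbation of the infinite recurrence. Quantitative truncation-error estimates for the Gauss continued fraction then upgrade this pointwise convergence into the asserted convergence of the whole sequence to the minimizer of \eqref{eq006}.
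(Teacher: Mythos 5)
Your proposal follows essentially the same route as the paper: the inequality and the system \eqref{eq019} come directly from \eqref{eq004}, and the $N\to\infty$ limit of the minimizer is obtained by writing $\omega_{k,N}^h/\omega_{0,N}^h$ as a product of truncated continued fractions converging to the ratios $I_k(1/\alpha h^2)/I_{k-1}(1/\alpha h^2)$ (the paper evaluates the finite convergents explicitly via Hardy--Wright's recurrence in terms of $I_k$ and $K_k$ together with the asymptotics $I_N\to 0$, $K_N\sim CN!$, whereas you invoke the Gauss continued fraction plus truncation estimates; these are equivalent). Your treatment of the cyclic closure as a perturbation of the last partial quotient that vanishes in the limit is consistent with how the paper handles it, namely by deferring the detailed continued-fraction computation to the slightly cleaner Dirichlet case.
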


Now we are going to see that we do not have a Virial identity in this finite case. In order to simplify, we set $h=1$.

The equation we consider here is
\[\left\{\begin{array}{ll}\partial_tu_k=i(u_{k+1}-2u_k+u_{k-1}),&k=-N+1,\dots,N-1,\\
\partial_tu_{N}=i(u_{-N}-2u_{N}+u_{N-1}),&\\ \partial_tu_{-N}=i(u_{-N+1}-2u_{-N}+u_{N}).&\end{array}\right.
\]

Differentiating $\sum_{k=-N}^{N}|u_k(t)|^2$ we notice that this quantity is invariant.

Now we differentiate $F(t)=\sum_{k=-N}^{N}k^2|u_k|^2$, getting
\[
\dot F(t)=2\Im\sum_{k=-N+1}^{N}(1-2k)u_k\overline{u_{k-1}}.\]

\[\begin{aligned}
\ddot F(t)&=2\Re\Biggl(-2\sum_{k=-N+1}^{N-1}u_{k+1}\overline{u_{k-1}}+2\sum_{-N+2}^{N}|u_{k-1}|^2\bigr.\\ &\ \bigl.\ \ \ +(2N-1)(u_{-N}\overline{u_{N-1}}-|u_{N}|^2-|u_{-N}|^2+u_{-N+1}\overline{u_N})\Biggr).
\end{aligned}\]

In the classic and $\ell^2(\mathbb{Z})$ cases, $\ddot F(t)=C\ge 0$. Furthermore, $\ddot F(t)$ was 8 times the momentum term on the uncertainty principle. This is not the case of the Periodic case, since
\[
\ddot F(t)=8\left(\sum_{k=-N}^{N}\left|\frac{u_{k+1}-u_{k-1}}{2}\right|^2+(2N+1)\left(\frac{u_{-N}\overline{u_{N-1}}+u_{-N+1}\overline{u_N}-\left|u_{N}\right|^2-\left|u_{-N}\right|^2}{4}\right)\right),
\]
where we make the identification $u_{N+1}=u_{-N}$ and $u_{-N-1}=u_{N}$. As we can see, $\ddot F(t)$ is not a positive constant. We can take another derivative to check that 
\[
\dddot F(t)=C_N\Im(3u_{N}\overline{u_{N-1}}-u_{-N}\overline{u_{N2}}+u_{-N+2}\overline{u_N}-3u_{-N+1}\overline{u_{-N}})\ne 0,
\]
and we also have
\[\begin{aligned}
&N=3,\ \ u(0)=(0,1,0,0,0,1,0)\ \Rightarrow\ \ddot{F}(0)=8\ge 0,\\
&N=3,\ \ u(0)=(2,1,0,0,0,1,0)\ \Rightarrow\ \ddot{F}(0)=-12\le 0.
\end{aligned}\]

\begin{rem} If we had that $\ddot F(t)$ is the momentum term, then we would have that $\dddot{F}(t)$=0.\end{rem}

\subsection{Dirichlet case}

Now we consider the Hilbert space
\[\mathcal{H}_{dir}=\{a=(a_k)_{k=-N}^N: a_N=a_{-N}=0\},\]
and the operators
\begin{equation}\label{eq021}
\mathcal{S}_{dir}=\left[\begin{array}{ccc}
-Nh&&0\\
&\ddots&\\
0&&Nh
\end{array}\right],\ \ \
\mathcal{A}_{dir}=\frac{1}{2h}\left[\begin{array}{ccccc}
0&0&0&\cdots&0\\
-1&0&1&\cdots&0\\
0&-1&0&\cdots&0\\
 & &\ddots&\ddots&\\
 0&0&\cdots&0&0\end{array}\right].
\end{equation}

These operators are the same operators we take in \eqref{018} but with a slight modification in $\mathcal{A}_{dir}$ in  order to send a sequence in $\mathcal{H}_{dir}$ to another sequence in $\mathcal{H}_{dir}$. Thanks to this, both operators acting on sequences in $\mathcal{H}_{dir}$ give another sequence in $\mathcal{H}_{dir}$ and they are respectively symmetric and skew-symmetric.

The uncertainty principle now is very similar to the one we get above, but we have to take into account that the first and the last components of the sequences are zero and then the uncertainty principle is, $\forall u\in \mathcal{H}_{dir}$,
\begin{equation}\label{eq021}\begin{aligned}
&\left|h\sum_{k=-N+1}^{N-1}|u_k|^2-\frac{h^2}{2}h\sum_{k=-N}^{N-1}\left|\frac{u_{k+1}-u_{k}}{h}\right|^2\right|=\left|\Re h\sum_{k=-N+1}^{N-1}u_{k+1}\overline{u_k}\right|\\&\hspace{3cm}\le2\left(h\sum_{k=-N+1}^{N-1}|khu_k|^2\right)^{1/2}\left(h\sum_{k=-N+1}^{N-1}\left|\frac{u_{k+1}-u_{k-1}}{2h}\right|^2\right)^{1/2}. \end{aligned}\end{equation}

Now we want to see who the minimizing sequence is in this inequality. This sequence $\omega=(\omega_{k,N}^h)_{k=-N}^N\in \mathcal{H}_{dir}$, as before, has to satisfy $(\alpha \mathcal{S}_{dir}+\mathcal{A}_{dir})\omega=\bf{0}$, that is, $\omega_{N,N}^h=\omega_{-N,N}^h=0$ and
\[
\alpha kh\omega_{k,N}^h+\frac{\omega_{k+1,N}^h-\omega_{k-1,N}^h}{2h}=0\Longleftrightarrow \omega_{k+1,N}^h+2\alpha kh^2\omega_{k,N}^h=\omega_{k-1,N}^h,\ \ \ k=-N+1,\dots,N-1.
\]

Considering the equation $k=0$, we have that $\omega_{1,N}^h=\omega_{-1,N}^h$, and, by induction, we easily see that $\omega_{-k,N}^h=\omega_{k,N}^h,\ k=-N+1,\dots,N-1$, and, by an iterative process
\begin{equation}\label{eq022}\omega_{k,N}^h=\frac{1}{[2k\alpha h^2,\dots,2(N-1)\alpha h^2]}\frac{1}{[2(k-1)\alpha h^2,\dots,2(N-1)\alpha h^2]}\cdots\frac{1}{[2\alpha h^2,\dots,2(N-1)\alpha h^2]}\omega_{0,N}^h,\end{equation}
where
\[[a_0,a_1,\cdots,a_n]=a_0+\frac{1}{a_1+\frac{1}{\ddots+\frac{1}{a_n}}}.\]

In order to compute the value of each continued fraction, we use again (see Section 5.1) Theorem 149 in \cite{i}, and we observe that
\[
[2k\alpha h^2,\dots,2(N-1)\alpha h^2]=\frac{(-1)^{N+k}K_{k-1}(1/\alpha h^2)I_N(1/\alpha h^2)+I_{k-1}(1/\alpha h^2)K_N(1/\alpha h^2)}{(-1)^{N+k+1}K_k(1/\alpha h^2)I_N(1/\alpha h^2)+I_k(1/\alpha h^2)K_N(1/\alpha h^2)}.
\]

Since we know that $I_N(1/\alpha h^2)$ tends to zero and $K_N(1/\alpha h^2)\simeq C N!$  when $N$ tends to infinity, we have
\[[2k\alpha h^2,\dots,2(N-1)\alpha h^2]\xrightarrow[N\rightarrow\infty]{}\frac{I_{k-1}(1/\alpha h^2)}{I_{k}(1/\alpha h^2)},\]
hence, from \eqref{eq022}, under the assumption that $\omega_{0,N}^h=\omega_0$ for all $N$,
\[
\omega_{k,N}^h\xrightarrow[N\rightarrow\infty]{}\frac{I_{k}(1/\alpha h^2)}{I_{k-1}(1/\alpha h^2)}\frac{I_{k-1}(1/\alpha h^2)}{I_{k-2}(1/\alpha h^2)}\cdots\frac{I_{1}(1/\alpha h^2)}{I_{0}(1/\alpha h^2)}\omega_0=\frac{I_{k}(1/\alpha h^2)}{I_{0}(1/\alpha h^2)}\omega_0.
\]

Therefore we recover the minimizing sequence of the first uncertainty principle we have seen here.

\begin{teo}\label{teo52} For all $u=(u_k)\in \mathcal{H}_{dir}$ the inequality \eqref{eq021} holds, and the equality is attained for the sequence $(\omega_{k,N}^h)$ given by \eqref{eq022}. Moreover, when we let $N$ tend to $\infty$, this sequence tends to the minimizer of \eqref{eq006}.
\end{teo}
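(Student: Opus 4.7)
The plan is to organize the pieces that are already laid out in the paragraphs preceding the statement into three steps: the inequality itself, the recognition of the minimizing sequence as \eqref{eq022}, and the limit as $N\to\infty$.

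First I would establish the inequality by applying the abstract relation \eqref{eq004} to $\mathcal{S}_{dir}$ and $\mathcal{A}_{dir}$. This requires checking three things: that both operators preserve $\mathcal{H}_{dir}$ (immediate for $\mathcal{S}_{dir}$ since it is diagonal and multiplies the boundary entries by $\pm Nh \cdot 0$, and built into $\mathcal{A}_{dir}$ since its first and last rows are identically zero); that $\mathcal{S}_{dir}$ is symmetric and $\mathcal{A}_{dir}$ is skew-symmetric for the inner product on $\mathcal{H}_{dir}$ (the usual discrete integration-by-parts argument of Section 2 goes through because the vanishing boundary kills any boundary term); and a direct computation of $-[\mathcal{S}_{dir},\mathcal{A}_{dir}]u_k$ that reproduces the left-hand side of \eqref{eq021}, so that \eqref{eq004} yields the desired estimate.

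Next, the equality case of \eqref{eq004} forces $\alpha\mathcal{S}_{dir}\omega+\mathcal{A}_{dir}\omega=\mathbf{0}$ for some $0\ne\alpha\in\mathbb{R}$, which is exactly the three-term recurrence $\omega_{k+1,N}^h+2\alpha kh^2\,\omega_{k,N}^h=\omega_{k-1,N}^h$ for $k=-N+1,\dots,N-1$, together with the Dirichlet conditions $\omega_{\pm N,N}^h=0$ built into $\mathcal{H}_{dir}$. The equation at $k=0$ gives $\omega_{1,N}^h=\omega_{-1,N}^h$, and induction yields the even symmetry $\omega_{-k,N}^h=\omega_{k,N}^h$. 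Solving the recurrence iteratively from $k=N-1$ down to $k=1$, starting from the boundary ratio forced by $\omega_{N,N}^h=0$, expresses each $\omega_{k,N}^h/\omega_{0,N}^h$ as the telescoping product of continued fractions displayed in \eqref{eq022}; this is exactly the classical Euler--Wallis derivation applied to the recurrence.

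For the limit $N\to\infty$ I would use the identification of the convergents with modified Bessel functions: by Theorem 149 of \cite{i} the numerators $p_n$ and denominators $q_n$ of the continued fraction both satisfy the recurrence $a_n z_n=z_{n-1}-z_{n-2}$ with $a_n=2n\alpha h^2$, which is the three-term recurrence for $I_n(1/\alpha h^2)$ and for $(-1)^n K_n(1/\alpha h^2)$. Hence $p_n,q_n$ are fixed $\mathbb{R}$-linear combinations of these two independent solutions, pinned down by the initial data $p_0,p_1,q_0,q_1$; this gives the explicit formula quoted in the discussion before the theorem. Since $I_N(1/\alpha h^2)\to 0$ and $K_N(1/\alpha h^2)\sim C\,N!$ as $N\to\infty$, each continued fraction tends to $I_{k-1}(1/\alpha h^2)/I_k(1/\alpha h^2)$, and the telescoping product in \eqref{eq022} collapses to $I_k(1/\alpha h^2)/I_0(1/\alpha h^2)\cdot\omega_{0}$, which is the minimizer of \eqref{eq006}.

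The main obstacle is the Bessel identification: checking that the coefficients in the expression of $p_n,q_n$ as combinations of $I_n$ and $(-1)^n K_n$ really give the formula for the continued fraction quoted above (and hence the correct sign pattern $(-1)^{N+k}$), and that the asymptotic $I_N/K_N\to 0$ is uniform enough in $k$ (for $k$ fixed and $N\to\infty$) to pass to the limit inside the finite telescoping product. Both are routine once set up, but they are the only non-bookkeeping step in the argument.
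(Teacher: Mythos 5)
Your plan reproduces the paper's own argument step for step: the inequality comes from applying \eqref{eq004} to $\mathcal{S}_{dir}$ and $\mathcal{A}_{dir}$, the equality case yields the three-term recurrence with Dirichlet boundary data whose solution is the telescoping product of continued fractions \eqref{eq022}, and the $N\to\infty$ limit is computed by identifying the convergents with $I_n(1/\alpha h^2)$ and $(-1)^nK_n(1/\alpha h^2)$ via Theorem 149 of \cite{i} and using $I_N\to 0$, $K_N\simeq C\,N!$. The two items you flag as the only nontrivial checks (the sign pattern in the Bessel identification and passing to the limit in the finite product) are exactly the points the paper also treats as routine, so the proposal is correct and essentially identical in approach.
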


Wondering about the existence of an analogue of \eqref{eq002} in this Dirichlet case (we simplify again $h=1$), we consider a solution to the discrete Schrödinger equation
\[\left\{\begin{array}{ll}\partial_tu_k=i(u_{k+1}-2u_k+u_{k-1}),&k=-N+2,\dots,N-2,\\
\partial_tu_{N-1}=i(-2u_{N-1}+u_{N-2}),&\\ \partial_tu_{-N+1}=i(u_{-N+2}-2u_{-N+1}).&\end{array}\right.
\]

It is easy to check that this equation is $\mathcal{H}_{dir}-$invariant.Moreover,
\[
\dot F(t)=2\Im\sum_{k=-N+2}^{N-1}(2k-1)u_k\overline{u_{k-1}}.\]

Taking another derivative, 
\[\begin{aligned}
\ddot F(t)&=2\Re\Biggl(-2\sum_{k=-N+2}^{N-2}u_{k+1}\overline{u_{k-1}}+2\sum_{-N+3}^{N-1}|u_{k-1}|^2-(2N-3)|u_{N-1}|^2\\&\ \ \ -(2N-3)|u_{-N+1}|^2\Biggr).
\end{aligned}\]

In the classic and $\ell^2(\mathbb{Z})$ cases, $\ddot F(t)=C\ge 0$. Furthermore, $\ddot F(t)$ was 8 times the momentum term on the uncertainty principle. This is not the case of the Dirichlet case, since
\[
\ddot F(t)=8\left(\sum_{k=-N+1}^{N-1}\left|\frac{u_{k+1}-u_{k-1}}{2}\right|^2-(2N-2)\left|\frac{u_{N-1}}{2}\right|^2-(2N-2)\left|\frac{u_{-N+1}}{2}\right|^2\right).
\]

Moreover, $\ddot F(t)$ is not a positive constant. We can take another derivative to check that 
\[
\dddot F(t)=C_N\Im(u_{N-2}\overline{u_{N-1}}+u_{-N+2}\overline{u_{-N+1}})\ne 0,
\]
where $C_N$ is a constant which depends on $N$. We also have
\[\begin{aligned}
&N=3,\ \ u(0)=(0,1,0,0,0,1,0)\ \Rightarrow\ \ddot{F}(0)=-12\le 0,\\
&N=3,\ \ u(0)=(0,1,2,0,0,1,0)\ \Rightarrow\ \ddot{F}(0)=4\ge 0.
\end{aligned}\]

\begin{rem} Even if we had that $\ddot F(t)$ is the momentum term, then $\dddot F(t)$ would not be zero, as we can see differentiating the momentum term, being this a  difference between the Dirichlet case and the Periodic case.\end{rem}

\begin{rem} The non-existence of a convex parabola like \eqref{eq003} in these finite cases makes sense, since, as we have said in the introduction, in the continuous case, when the periodic Schr\"{o}dinger equation is considered, there is no equivalent to Theorem \ref{teo11}.\end{rem}

\section{Acknowledgments.} The author is supported by the predoctoral grant BFI-2011-11 of the Basque Government and by MTM2011-24054 and IT641-13. The author would also like to thank O. Ciaurri and L. Vega, without whose help this paper would not have been possible, and the reviewers for their constructive comments that have improved the paper.

\section{\refname}

\end{document}